\newtheorem{defi}{Definition}[section]
\newtheorem{thm}{Theorem}
\newtheorem{prop}{Proposition}
\newtheorem{exe}{Example}
\newtheorem{lem}{Lemma}
\newtheorem{rema}{Remark}
\newtheorem{coro}{Corollary}
\newcommand\nn{\mathbb{N}}
\newcommand\zz{\mathbb{Z}}
\newcommand\rr{\mathbb{R}}
\newcommand\ttt{\mathbb{T}}
\newcommand\hh{\mathbb{H}}
\newcommand\ungra{\mathbf{1}}
\newcommand\fra[2]{\displaystyle\frac{#1}{#2}}
\newcommand\tq{\mbox{ } | \mbox{ }}
\newcommand\cali[1]{\mathcal{#1}}
\newcommand*\diff{\mathop{}\!\mathrm{d}}
\DeclareMathOperator{\sln}{SL}
\DeclareMathOperator{\supp}{supp}
\DeclareMathOperator{\Stab}{Stab}
\author[A. Pinochet Lobos]{Antoine Pinochet Lobos}\thanks{I2M, CNRS UMR7373, Université d'Aix-Marseille, a.p.lobos \emph{at} outlook.com}
\author[C. Pittet]{Christophe Pittet}\thanks{I2M, CNRS UMR7373, Université d'Aix-Marseille et Section de Mathématiques, Faculté des Sciences, Université de Genève, pittet \emph{at} math.cnrs.fr\\
The authors acknowledge support of the FNS grant 200020-200400.}
\title[Universal lower bounds  for the discrepancies of actions of  a l.c. group]{Universal lower bounds for the discrepancies of actions of a locally compact group}
\date{}
\keywords{Atomless measure, convolution operator, discrepancy, Koopman representation, locally compact group, measure-preserving action, regular representation, spectral gap,  unitary representation, von Neumann ergodic theorem, weak containment.}
\subjclass[2020]{Primary: 37A15 ; Secondary: 37A30}
\begin{document}

\begin{abstract}
We prove universal lower bounds  for discrepancies (i.e. sizes of spectral gaps of averaging operators) of measure-preserving actions of  a locally compact group on probability spaces. For example,  a locally compact Hausdorff unimodular group $G$, acting continuously, by measure-preserving transformations, on a compact atomless probability space $(X,\nu)$, with an orbit $Gx_0$ of measure zero, contained in the support of $\nu$, and with compact stabilizer (i.e. $G_{x_0}$ is compact) has the following property:  any finite positive regular Borel measure $\mu$ on $G$ satisfies $$\|\pi_0(\mu)\|_{2\to 2}\geq \|\lambda_G(\mu)\|_{2\to 2},$$ where $\pi_0$ denotes the Koopman representation of $G$, defined by the given action, and $\lambda_G$ denotes the left-regular representation of $G$. The lower bounds we prove generalize the universal lower bounds for the discrepancies of measure-preserving actions of a discrete group. Many examples show that the generalization from discrete groups to locally compact groups requires some additional hypothesis on the action (we detail some examples of actions of amenable groups with a spectral gap, due to Margulis). Well-known examples and results of Kazhdan and Zimmer show that the discrepancies of some actions of Lie groups on homogeneous spaces match exactly the universal lower bounds we prove. 
\end{abstract}
\maketitle
\tableofcontents

\section{Introduction}
The discrepancies of a measure-preserving group action are related to the possible rates of convergence in ergodic von Neumann type theorems.
They measure the sizes of spectral gaps of averaging operators. See Definition \ref{defidiscrep} and Proposition \ref{prop: Koopman} below.
The notion of discrepancy can be interpreted as a measure of the ergodicity of the action \cite[Thm 4.2]{ShaTata}.

Isometric group actions, on the $2$-dimensional sphere, of free groups with the smallest possible discrepancies (equivalently, with the fastest possible rate of convergence) have been constructed by Lubotzky, Phillips, and Sarnak. See \cite{LPS} and \cite{PinPitExactRate}.
For discrete groups, there exist universal lower bounds for the  discrepancies of  measure-preserving actions on atomless probability spaces, which can be expressed with the help of the regular representation. In other words, there are lower bounds for the discrepancies, which are valid for any measure-preserving action of the group on an atomless probability space, and these universal lower bounds, which are inherent to the structure of the group, can be extracted from the regular representation of the group. The above mentioned actions considered by Lubotzky, Phillips, and Sarnak, have discrepancies matching exactly these universal lower bounds. See \cite{PinPitExactRate}.

A remarkable  and well-known connection between dynamical systems and the theory of unitary representations is that discrepancies of locally compact group actions are equal to the norms of operators associated to the corresponding Koopman representations (see  Proposition \ref{prop: Koopman}  below for the precise statement we will use). In other words, the discrepancies we consider (as initially defined in \cite[0.3 and 0.3']{LPS}) measure the sizes of the spectral gaps of Koopman operators.  These norms, equivalently these spectral gaps, have been studied by many authors (see for example \cite{ShaTata} \cite{FurSha}, \cite{BekGui}, \cite{GorNev}, \cite{GorNevBook}). Spectral transfer, in the sense of Nevo \cite{Nev}, is a powerful method to estimate these norms. Roughly speaking, the idea is to apply different unitary representations, let's say $\pi$ and $\lambda$, of a locally compact group $G$, to a given probability measure $\mu$ on $G$, and to compare the operator norms of the operators $\pi(\mu)$  and $\lambda(\mu)$ acting on different Hilbert spaces. See also \cite[4. Spectral gaps and ergodic theorems]{GorNev}. For example, one of the main point in the work of Lubotzky, Phillips, Sarnak, cited above, is the equality between the spectrum of $\pi_0(\mu_n)$ and the one of $\lambda_G(\mu_n)$, where $\pi_0$ is a specific (defined by arithmetic conditions) Koopman representation on the $2$-dimensional sphere of a finitely generated free group $G$, where $\lambda_G$ is the left regular representation of $G$, and where $\mu_n$ is the uniform measure on the sphere of radius $n$ with center the identity $e\in G$ with respect to the word metric defined by the canonical generating set of $G$. When $n=1$, the equality of the spectra is proved with the help of Deligne's solution to the Weil conjecture (see \cite[page 419]{LPS2} and \cite{Lub}), the cases with $n>1$ follow from the case $n=1$ and from the spectral theorem applied to Hecke elements. Exact computations can be performed with the help of the Harish-Chandra function of the natural boundary representation of the free group (see \cite{PinPitExactRate}).  In \cite[Theorem 1]{Nev}, Nevo applies the spectral transfer method to any Kazhdan connected semi-simple Lie group $G$ with finite center and without compact factor. Relying on estimates of some coefficients of unitary representations of $G$, due to Cowling \cite{Cow}, Nevo shows that there is an even integer $n(G)$, such that the Koopman representation $\pi_0$ of $G$, defined by any measure-preserving ergodic action of $G$ on a probability space, satisfies, for any Borel probability measure $\mu$ on $G$,
$\|\pi_0(\mu)\|_{2\to 2}\leq\|\lambda_G(\mu)\|_{2\to 2}^{1/n(G)}$. In some special cases, it is known (see \cite[4.1]{Nev} and  Proposition \ref{exemple optimal} below) that the upper bound on $\|\pi_0(\mu)\|$ (we drop the $2\to 2$ indexes) can be strengthened to $\|\pi_0(\mu)\|\leq\|\lambda_G(\mu)\|$. 

Most of the results on discrepancies are upper bounds (one tries to get the fastest possible rate of convergence, by proving $\|\pi_0(\mu)\|<1$ is as small as possible, equivalently by proving the size of the spectral gap is as large as possible). But lower bounds have also been studied, for example in \cite[Th\'eor\`eme 3.3]{Sev}, \cite[Theorem 2]{Clo}, \cite[Theorem 4]{Pis}, \cite[Theorem 1.3]{LPS}. All these lower bounds concern discrete groups. Proposition \ref{mainthm discrepance localement compact} below generalizes all these lower bounds, and  applies to a general locally compact Hausdorff group $G$. Its conclusion is the lower bound 
\begin{align}\label{lower bound}
	\|\pi_0(\mu)\|\geq\|\lambda_G(\mu)\|,
\end{align}
for any positive bounded Borel regular measure $\mu$ on $G$, where $\pi_0$ is the Koopman representation defined by a measure-preserving $G$-action on an atomless probability space, satisfying technical hypothesis we will comment on.
The proof of Proposition \ref{mainthm discrepance localement compact} is inspired from the proof of \cite[Theorem 3.6]{PinPitExactRate}, which in turn is based on  the idea of the proof of \cite[Th\'eor\`eme 3.3]{Sev}. In contrast with the case of discrete groups, Inequality (\ref{lower bound}) can't be true for general measure preserving actions of general locally compact Hausdorff groups on an atomless probability space: there are many examples of amenable Lie groups acting by measure preserving transformations on atomless probability spaces which have a spectral gap (see Section \ref{contreex}). Condition (3) of Proposition \ref{mainthm discrepance localement compact} (which is trivially fulfilled in the case $G$ is discrete) is an additional assumption on the action which makes possible to handle the general case of a locally compact Hausdorff group $G$. Heuristically, Condition (3) guarantees that F{\o}lner quotients of partial orbits growth sub-linearly (see Subsection \ref{heuristic} below for an example and some pictures). Condition (3) potentially excludes some  interesting actions, but we show in (the proof of) Theorem \ref{thm: orbites negligeables} that it is fulfilled in many natural situations. 

Shalom, and independently, Dudko and Grigorchuk, have proven that for any discrete group acting by measure-preserving transformations on any atomless probability space, the associated   Koopman representation $\pi_0$ (i.e. the one on square integrable functions of zero integral, which is the one we study in this paper)  weakly contains the quasi-regular representation associated with almost every point stabilizer; see \cite[Proposition 7]{DudGri} (where the atomless probability space is assumed to be standard) and \cite{ShaTata} (where the atomless probability space is not assumed to be standard but the result is stated without proof). This weak containment result implies Inequality (\ref{lower bound}) for all positive finite regular Borel measure $\mu$: one applies the characterization of weak containment explained in \cite[Remark 8.B.6 (2) page 244]{BekDelaHa}, and an inequality due to Shalom \cite[Lemma 2.3]{ShaAnnals} (which requires the measure $\mu$ to be positive). The same conclusion, namely Inequality (\ref{lower bound}) for all positive finite regular Borel measure $\mu$, is obtained directly by applying Proposition \ref{mainthm discrepance localement compact}. In the case there exists a trivial stabilizer, Inequality (\ref{lower bound})  is true for any bounded Borel regular measure $\mu$, not only for positive $\mu$; equivalently $\pi_0\succ \lambda_G$.  

In the case $G$ is totally discontinuous and second countable, Caprace, Kalantar, and Monod, prove an interesting weak containment result, for some measurable actions, preserving \emph{the class} of the measure, on a standard probability space (see \cite[Theorem F]{CapraceKalantarMonod}) but it seems that no generalization  of the weak containment result of Shalom and Dudko-Grigorchuk mentioned above is known for locally compact Hausdorff groups. Notice that \cite[Theorem F]{CapraceKalantarMonod} says nothing about Inequality (\ref{lower bound}): take $G$  a compact group with Haar measure $\mu_G$, for example $G=\mathbb Z_p$ the $p$-adic integers, and $\pi_0$ the Koopman representation on $L^2_0(G,\mu_G)$, then $\pi_0(\mu_G)=0$, whereas the amenability of $G$ implies $\|\lambda_G(\mu_G)\|=\mu_G(G)$.

In the proof of Theorem \ref{thm: orbites negligeables}, we use a technic from geometric group theory (volume approximation in a locally compact group based on nets relative to a word metric) to show that if $G$ is locally compact Hausdorff unimodular and acts continuously and freely (more generally with compact stabilizers) with an orbit of measure zero (contained in the support of the measure), then all the hypothesis of Proposition \ref{mainthm discrepance localement compact} are fulfilled, hence Inequality (\ref{lower bound}) holds in this case (the existence of a orbit with zero measure is a necessary condition as many examples show: see Section \ref{contreex}).  We do not know if the compact stabilizer hypothesis and/or the unimodularity hypothesis in Theorem \ref{thm: orbites negligeables} can be removed (but the proof we give, using nets, needs both hypothesis). 

An application of Theorem \ref{thm: orbites negligeables} is the following. Suppose  $H$ is a closed unimodular subgroup of a Lie group $G$ and $\Gamma$ is a lattice in $G$ such that $\Gamma\cap H$ is finite. If the dimension of $H$ is smaller than the one of $G$, then the Koopman representation associated to the natural action of $H$ on $G/\Gamma$ satisfies
Inequality (\ref{lower bound}) for any positive bounded Borel regular measure on $H$. See Corollary \ref{coro: actions sur reseaux}. In Proposition \ref{exemple optimal} we give examples of actions of Lie groups where the lower bounds brought by Theorem \ref{thm: orbites negligeables} and Corollary \ref{coro: actions sur reseaux}
match exactly upper bounds deduced from well known results of Kazhdan and Zimmer.

\section{Outline of the paper}
In Section \ref{preliminaries} we fix notation and briefly recall well-known links between discrepancies, operator norms, and Koopman representations.
In Section \ref{statement} we state the main results of the paper.
In Section \ref{heuristic} we illustrate Condition (3) from Proposition \ref{mainthm discrepance localement compact} with an example and two pictures. 
Section \ref{demo thm} is devoted to the proofs of the results stated in Section \ref{statement}.  
In Section \ref{contreex}, we recall a result of Margulis implying the existence of plenty of natural actions of amenable groups with a spectral gap.  
Finally, we describe in Section \ref{sect: exemple de Zimmer} an action of $\mathbb{R}^2 \rtimes \sln_2(\mathbb{R})$ on a finite-volume homogeneous space, where the inequality of Corollary \ref{coro: actions sur reseaux} is sharp. 

\emph{Acknowledgements.}
We warmly thank Pierre-Emmanuel Caprace who was at the origin of the questions that we address in this paper; we are grateful to him for helpful discussions and remarks.

\section{Preliminaries}\label{preliminaries}

In this paper, $G$ denotes a locally compact Hausdorff topological group and $\mu_G$ is a left Haar measure on $G$.

\begin{defi}[Discrepancy]
\label{defidiscrep}
Let $(X,\cali{T},\nu)$ be a probability space, and $G \curvearrowright X$ a measurable action which preserves $\nu$. Let $\mu$ be a Borel probability measure on $G$. We call the number \[\delta(\mu) := \sup_{\substack{\phi \in L^2(X,\nu)\\ \Vert \phi \Vert_2 = 1}} \left\Vert x \mapsto \int_G \phi\left( g^{-1}x\right)\diff \mu - \int_X \phi \diff \nu\right\Vert_2.\] the \textbf{discrepancy} of $\mu$.
Similarly, for any continuous, non-negative, compactly-supported function $f$ on $G$ of integral $1$, we denote by $\delta(f)$ the discrepancy of the probability measure with density $f$ with respect to $\mu_G$.
\end{defi}

We denote  \[\begin{array}{rcl}
\lambda_G : G &\rightarrow &U(L^2(G,\mu_G))\\
g&\mapsto &\displaystyle \phi \mapsto \left(h \mapsto \phi(g^{-1}h)\right)\\
\end{array}\] the left regular representation of $G$.

\begin{defi}[Koopman representation]\label{defi: Koopman}

Consider an action of $G$ on a measure space $(X,\nu)$ by measurable transformations preserving the measure.

The \textbf{Koopman representation} associated to this action is defined by \[\begin{array}{rcl}
\pi : G &\rightarrow &U(L^2(X,\nu))\\
g &\mapsto &(\phi \mapsto (x \mapsto \phi(g^{-1}x)))\\
\end{array}\]
\end{defi}

\begin{rema}\label{remarks about Koopman}
\begin{enumerate}
    \item The left regular representation is the Koopman representation for the action of the locally compact Hausdorff group $G$ endowed with a left Haar measure $\mu_G$ by left multiplication on itself.
    \item For every $g \in G$, the operator $\pi(g)$ defined above is unitary and $\pi$ is a homomorphism. Moreover, if $G$ is $\sigma$-compact, locally compact Hausdorff, if $\nu$ is $\sigma$-finite and $L^2(X,\nu)$ is separable, then the Koopman representation $\pi$ is a unitary representation  (that is, it is continuous if we endow $U(L^2(X,\nu))$ with the strong operator topology; see \cite[Proposition A.6.1]{BHV}). When we consider a Koopman representation, or any unitary representation, it is always implicitly assumed that it is strongly continuous.
    \item \label{rema: defi subrep Koopman} If $\nu$ is a finite measure, then $\textbf{1}_{X}$ is a fixed vector, and its orthogonal complement is a sub-representation. This subspace is the subset of all (representatives of) functions with zero integral and we denote it by $L^2_0(X,\nu)$. We denote the associated unitary sub-representation \[\pi_0 : G \rightarrow U(L^2_0(X,\nu)).\]
We also call $\pi_0$ a Koopman representation.
\end{enumerate}
\end{rema}

Let $\pi$ be a  unitary representation of $G$ on a Hilbert space $\mathcal{H}$. Let $\mathcal{B}(\mathcal{H})$ be the involutive algebra of bounded operators on $\mathcal{H}$  and let $\mu$ be a finite regular Borel measure on $G$. Recall that the operator $\pi(\mu)$ in defined in the following way: \[\pi(\mu) := \int_G \pi(g)\diff\mu(g),\]that is, $\pi(\mu)$ is the only bounded operator on $\mathcal{H}$ such that for all $\xi,\eta \in \mathcal{H}$, \[\langle \pi(\mu)\xi,\eta\rangle := \int_G \langle \pi(g)\xi,\eta\rangle \diff\mu(g).\]
The unitary representation $\pi$ induces in this way a $*$-representation of the involutive algebra $\mathcal{M}(G)$ of regular complex Borel measures on $G$: 
\[\begin{array}{rcl}
\mathcal{M}(G) &\rightarrow &\mathcal{B}(H)\\
\mu &\mapsto &\displaystyle\pi(\mu).\\
\end{array}\]
See  \cite[Appendix 6]{BekDelaHa}. For a regular symmetric probability measure $\mu$ on $G$, the operator $\pi(\mu)$ is self-adjoint, and its operator norm is smaller or equal than $1$. 

\begin{prop}[Koopman representation and discrepancies] \label{prop: Koopman}
Consider an action of $G$ on a probability space $(X,\nu)$ by measurable transformations preserving the measure, and let $\pi_0$ the sub-representation of the Koopman representation $\pi$ on the orthogonal of the subspace of constant functions (see (\ref{rema: defi subrep Koopman}) in Remark \ref{remarks about Koopman} above).
Let $\mu$ be a bounded regular Borel measure on $G$. Then $\delta(\mu) = \Vert \pi_0(\mu)\Vert$.
\end{prop}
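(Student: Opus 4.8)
The plan is to unwind both sides of the claimed equality $\delta(\mu) = \|\pi_0(\mu)\|$ directly from the definitions, identifying the averaging operator appearing in the discrepancy with the operator $\pi(\mu)$ of the Koopman representation, and then accounting for the split $\pi = \ungra_X \oplus \pi_0$. First I would observe that for $\phi \in L^2(X,\nu)$ the function $x \mapsto \int_G \phi(g^{-1}x)\diff\mu(g)$ is, by the very definition of $\pi(\mu)$ as a weak integral, exactly $(\pi(\mu)\phi)(x)$; this requires a small routine check that the scalar-valued function $g \mapsto \langle \pi(g)\phi, \eta\rangle$ is measurable and that the weak integral agrees with the pointwise-defined average, which follows because $\mu$ is a bounded regular Borel measure and the action is measure-preserving (so $\pi$ is isometric and the integrand is bounded in norm by $\|\phi\|_2$). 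Hence the quantity inside the norm in Definition~\ref{defidiscrep} is $\pi(\mu)\phi - \left(\int_X \phi\,\diff\nu\right)\ungra_X$.

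Next I would decompose $\phi = \phi_0 + c\,\ungra_X$ orthogonally, where $c = \int_X \phi\,\diff\nu = \langle \phi, \ungra_X\rangle$ (using $\nu(X)=1$) and $\phi_0 \in L^2_0(X,\nu)$. Since $\ungra_X$ is a fixed vector of $\pi$, we have $\pi(\mu)\ungra_X = \mu(G)\,\ungra_X$, and since $L^2_0(X,\nu)$ is $\pi$-invariant, $\pi(\mu)\phi_0 = \pi_0(\mu)\phi_0 \in L^2_0(X,\nu)$. Therefore
\[
\pi(\mu)\phi - c\,\ungra_X = \pi_0(\mu)\phi_0 + (c\mu(G) - c)\ungra_X,
\]
and by orthogonality of $L^2_0(X,\nu)$ and $\cc\,\ungra_X$,
\[
\left\| \pi(\mu)\phi - c\,\ungra_X \right\|_2^2 = \|\pi_0(\mu)\phi_0\|_2^2 + |c|^2|\mu(G)-1|^2.
\]

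Here a subtlety arises: the statement asserts $\delta(\mu) = \|\pi_0(\mu)\|$ with no hypothesis that $\mu$ be a probability measure, yet Definition~\ref{defidiscrep} of $\delta$ is phrased for a Borel \emph{probability} measure $\mu$. I would read the proposition as concerning probability measures $\mu$ (so $\mu(G)=1$), in which case the second term vanishes and $\|\pi(\mu)\phi - c\ungra_X\|_2 = \|\pi_0(\mu)\phi_0\|_2$; taking the supremum over $\|\phi\|_2 = 1$ and noting $\|\phi_0\|_2 \le \|\phi\|_2$ with equality attainable (pick $\phi = \phi_0 \in L^2_0$, $\|\phi_0\|_2=1$), one gets $\delta(\mu) = \sup_{\|\phi_0\|_2 \le 1,\ \phi_0 \in L^2_0}\|\pi_0(\mu)\phi_0\|_2 = \|\pi_0(\mu)\|$. (If one instead wishes to allow general bounded $\mu$, the natural reading is that $\delta(\mu)$ is renormalized, or the definition is extended verbatim; the extra term $|c|^2|\mu(G)-1|^2$ then shows the naive $\delta$ is not simply $\|\pi_0(\mu)\|$, which suggests the intended scope is probability measures.)

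The main obstacle I anticipate is not conceptual but a measurability/integration bookkeeping point: one must justify that the pointwise-defined average $x \mapsto \int_G \phi(g^{-1}x)\diff\mu(g)$ is well-defined $\nu$-almost everywhere, lies in $L^2(X,\nu)$, and coincides as an $L^2$-element with the weakly-defined operator $\pi(\mu)\phi$. This is where one uses that $\pi$ is strongly continuous (so $g \mapsto \pi(g)\phi$ is a bounded continuous $\mathcal H$-valued map, hence Bochner integrable against the finite measure $\mu$, and the Bochner integral represents the weak integral), together with Fubini to identify the Bochner integral with the pointwise average; the remaining steps are then elementary Hilbert-space manipulations with the orthogonal splitting.
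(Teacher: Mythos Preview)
Your proposal is correct and follows essentially the same approach as the paper's proof: both decompose $\phi$ orthogonally as $\phi_0 + c\,\ungra_X$ (the paper phrases this via the projectors $P_0, P_1$) and reduce the discrepancy expression to $\|\pi_0(\mu)\phi_0\|_2$, then take the supremum. Your observation about the probability-measure hypothesis is well taken: the paper's step $\delta(\mu,\phi) = \delta(\mu,P_0\phi)$ tacitly uses $\mu(G)=1$ for exactly the reason you identify, so the intended scope is indeed probability measures despite the proposition's phrasing; your additional care about identifying the pointwise average with the weak/Bochner integral $\pi(\mu)\phi$ is also more detailed than what the paper records.
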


\begin{proof}
Let $\phi \in L^2(X,\nu)$, and let us denote \[\delta(\mu,\phi) := \left\Vert x \mapsto \int_G \phi(g^{-1}x) \diff \mu - \int_X \phi \diff \nu\right\Vert_2,\]so that \[\delta(\mu) = \sup_{\substack{\phi \in L^2(X,\nu) \\ \Vert \phi \Vert_2 = 1}} \delta(\mu,\phi).\]
Let us denote $P_1$ the orthogonal projector on the subspace $\mathbb{C}\textbf{1}_X$ of constant functions, and let $P_0 := I - P_1$. Then it is straightforward to check that $P_0$ is the orthogonal projector on $L^2_0(X)$, for every $\phi \in L^2(X,\nu)$, \[P_1(\phi) = \int_X \phi \diff \nu \cdot \textbf{1}_X,\]that \[\delta(\mu,\phi) = \delta(\mu,P_0 \phi) = \Vert \pi_0(\mu)P_0\phi \Vert_2\]and the Proposition follows.
\end{proof}

A well-known consequence  of Proposition \ref{prop: Koopman} is the following. Assume $\mu$ is a Borel regular symmetric probability measure $\mu$ on $G$.  If $\delta(\mu) <1$,  then there is a uniform exponential rate of convergence in the corresponding von Neumann's ergodic theorem; that is, we have\[\forall \phi \in L^2(X,\nu),\, \|\phi\|_2\leq 1,\,\quad \left\Vert \pi(\mu)^n \phi - \int_X \phi\diff \nu\right\Vert_2 \leq \delta(\mu)^n.\]

\section{Statement of the results}\label{statement}
Let us recall that an \emph{atom} of a measure space $(X,\nu)$ is a measurable subset $A$ such that $\nu(A) > 0$ and every measurable subset $B$ of $A$ is such that \[\nu(B) \in \{0;\nu(A)\}.\] A measure space that does not have atoms is called \emph{atomless}.

The following proposition is a crucial technical tool in this paper.

\begin{prop}\label{mainthm discrepance localement compact}
Let $(X,\cali{B},\nu)$ be an atomless probability space. Let $G$ be a locally compact Hausdorff group acting on $X$ by  measure-preserving transformations.
We assume the corresponding Koopman representation $\pi_0$ is strongly continuous.  

Assume for each compact symmetric neighborhood $S$ of the identity $e\in G$, there exist a neighborhood $F$ of $e\in G$
and a sequence $(B_n)_{n \in \mathbb{N}}$ of measurable subsets of $X$, with the following properties:
\begin{enumerate}
\item $\forall n \in \nn, \ \nu(B_n) > 0$,
\item $\displaystyle\inf \{\nu(B) \tq B \in \cali{B}, \  S^nB_n \subset B\}  < \frac{1}{2}$,
\item $\displaystyle\limsup_{n \to \infty} \left(\inf_{g \in F}\fra{\nu(gB_n\cap B_n)}{\nu(B_n )}\right)^{\frac{1}{n}} \geq 1$.
\end{enumerate}
Then,  any  finite positive regular Borel measure $\mu$ on $G$ satisfies:
$$\Vert\pi_0(\mu)\Vert\geq\Vert\lambda_G (\mu) \Vert.$$
\end{prop}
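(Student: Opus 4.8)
The plan is to adapt the argument of \cite{Sev} and \cite[Thm 3.6]{PinPitExactRate}: for each $n$ build an explicit vector of $L^2_0(X,\nu)$ out of $B_n$ on which $\pi_0(\mu^{*2n})$ acts with a norm which, after a $2n$-th root, recovers $\|\lambda_G(\mu)\|$. I would first reduce to a convenient $\mu$. Since $\pi$ and $\lambda_G$ induce $*$-representations of $\mathcal M(G)$, one has $\|\pi_0(\mu)\|^2=\|\pi_0(\tilde\mu*\mu)\|$ and $\|\lambda_G(\mu)\|^2=\|\lambda_G(\tilde\mu*\mu)\|$, where $\tilde\mu$ is the adjoint measure; as $\tilde\mu*\mu$ is again finite, positive and self‑adjoint, we may assume $\mu=\tilde\mu$. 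By inner regularity of $\mu$ (approximate $\mu$ in total variation by $\mathbf{1}_K\mu$, $K$ a symmetric compact set) we may assume $\supp\mu$ compact, and by rescaling that $\mu(G)=1$. Fix a compact symmetric neighbourhood $S$ of $e$ with $\supp\mu\subseteq S$, so $\supp\mu^{*n}\subseteq S^n$; then $\pi_0(\mu),\lambda_G(\mu)$ are self‑adjoint of norm $\le 1$, hence $\|\pi_0(\mu)\|^{2n}=\|\pi_0(\mu^{*2n})\|$ and $\|\lambda_G(\mu)\|^{2n}=\|\lambda_G(\mu^{*2n})\|$. Apply the hypothesis to this $S$, shrinking the resulting neighbourhood $F$ to a symmetric one (condition (3) is preserved under shrinking $F$), and write $r_n:=\inf_{g\in F}\nu(gB_n\cap B_n)/\nu(B_n)\in(0,1]$.

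The core estimate uses $\zeta_n:=\mathbf{1}_{B_n}-\nu(B_n)\mathbf{1}_X\in L^2_0(X,\nu)$. Because $\mathbf{1}_X$ is $\pi$‑fixed, $\mu$ is a probability measure and the action preserves $\nu$, a direct expansion gives $\langle\pi(\mu^{*2n})\zeta_n,\zeta_n\rangle=\langle\pi(\mu^{*2n})\mathbf{1}_{B_n},\mathbf{1}_{B_n}\rangle-\nu(B_n)^2=\|\pi(\mu^{*n})\mathbf{1}_{B_n}\|_2^2-\nu(B_n)^2$, the last equality since $\mu^{*2n}=(\mu^{*n})^{*}*\mu^{*n}$. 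Therefore
\[\|\pi_0(\mu)\|^{2n}=\|\pi_0(\mu^{*2n})\|\ \ge\ \frac{\langle\pi(\mu^{*2n})\zeta_n,\zeta_n\rangle}{\|\mathbf{1}_{B_n}\|_2^2}\ =\ \frac{\|\pi(\mu^{*n})\mathbf{1}_{B_n}\|_2^2}{\nu(B_n)}-\nu(B_n).\]

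Next I would bound $\|\pi(\mu^{*n})\mathbf{1}_{B_n}\|_2^2$ from below in two complementary ways. (i) By condition (3): $\|\pi(\mu^{*n})\mathbf{1}_{B_n}\|_2^2=\int_G\nu(gB_n\cap B_n)\,d\mu^{*2n}(g)\ge\int_F\nu(gB_n\cap B_n)\,d\mu^{*2n}(g)\ge r_n\nu(B_n)\mu^{*2n}(F)$, which yields $\|\pi_0(\mu)\|^{2n}\ge r_n\mu^{*2n}(F)-\nu(B_n)$. (ii) By condition (2): $\pi(\mu^{*n})\mathbf{1}_{B_n}\ge 0$ has $L^1$‑norm $\nu(B_n)$ and vanishes off the set $\{x:\pi(\mu^{*n})\mathbf{1}_{B_n}(x)>0\}\subseteq S^nB_n$, whose measure is $<1/2$ by (2); Cauchy--Schwarz then gives $\|\pi(\mu^{*n})\mathbf{1}_{B_n}\|_2^2>2\nu(B_n)^2$, i.e. $\|\pi_0(\mu)\|^{2n}\ge\nu(B_n)$. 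Averaging (using $\max\{a,b\}\ge(a+b)/2$ with $a=r_n\mu^{*2n}(F)-\nu(B_n)$, $b=\nu(B_n)$) we obtain, for every $n$,
\[\|\pi_0(\mu)\|^{2n}\ \ge\ \max\bigl\{r_n\mu^{*2n}(F)-\nu(B_n),\ \nu(B_n)\bigr\}\ \ge\ \tfrac12\,r_n\,\mu^{*2n}(F),\qquad\text{so}\qquad \|\pi_0(\mu)\|\ \ge\ \bigl(\tfrac12\,r_n\,\mu^{*2n}(F)\bigr)^{1/2n}.\]

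It remains to supply a ``return probability'' lemma: $\liminf_n\mu^{*2n}(F)^{1/2n}\ge\|\lambda_G(\mu)\|$. I would prove it by choosing a symmetric neighbourhood $V$ of $e$ with $V^2\subseteq F$ and observing that $\langle\lambda_G(\mu^{*2n})\mathbf{1}_V,\mathbf{1}_V\rangle=\int_G\mu_G(gV\cap V)\,d\mu^{*2n}(g)\le\mu_G(V)\mu^{*2n}(F)$, while $\langle\lambda_G(\mu^{*2n})\mathbf{1}_V,\mathbf{1}_V\rangle=\|\lambda_G(\mu^{*n})\mathbf{1}_V\|_2^2=\int t^{2n}\,d\mu_{\mathbf{1}_V}(t)$, where $\mu_{\mathbf{1}_V}$ is the spectral measure of $\mathbf{1}_V$ relative to the self‑adjoint operator $\lambda_G(\mu)$; since $\mathbf{1}_V$ is a separating vector for $\lambda_G(G)''$ (being cyclic for the commutant $\lambda_G(G)'$), $\supp\mu_{\mathbf{1}_V}=\sigma(\lambda_G(\mu))$, so $\|\lambda_G(\mu^{*n})\mathbf{1}_V\|_2^{1/n}\to\max|\sigma(\lambda_G(\mu))|=\|\lambda_G(\mu)\|$, and hence $\mu^{*2n}(F)^{1/2n}\to\|\lambda_G(\mu)\|$. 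Finally, along the subsequence $(n_k)$ furnished by condition (3) one has $r_{n_k}^{1/2n_k}\to1$, whence $\|\pi_0(\mu)\|\ge\limsup_k\bigl(\tfrac12 r_{n_k}\mu^{*2n_k}(F)\bigr)^{1/2n_k}\ge\|\lambda_G(\mu)\|$, as desired. The step I expect to be the main obstacle is precisely the two‑sided control of $\|\pi(\mu^{*n})\mathbf{1}_{B_n}\|_2^2$: bound (i) is informative only when $\nu(B_n)$ is small compared with $\mu^{*2n}(F)$, whereas bound (ii) covers the opposite range, and it is the interpolation between them — forced by conditions (2) and (3) together — that allows the error term $-\nu(B_n)$ to be absorbed without any a priori smallness of $\nu(B_n)$; a secondary point requiring care is the return‑probability lemma, i.e. identifying $\lim_n\mu^{*2n}(F)^{1/2n}$ with the norm of $\lambda_G(\mu)$ via a separating vector of the group von Neumann algebra.
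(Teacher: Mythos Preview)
Your proof is correct and takes a genuinely different route from the paper's in two respects. First, the choice of test vector: the paper invokes Sierpi\'nski's intermediate-value theorem to construct a ``balancing'' set $B_n^-$ of the same measure as $B_n$, disjoint from $S^nB_n$, and tests $\pi_0(\mu^{*n})$ against $(\mathbf 1_{B_n}-\mathbf 1_{B_n^-})/\|\cdot\|_2$; condition~(2) is used to guarantee such a $B_n^-$ exists, and the cross terms then vanish exactly, giving $\|\pi_0(\mu^{*n})\|\ge\tfrac12\,r_n\,\mu^{*n}(F)$ directly. You instead test against the orthogonal projection $\zeta_n=\mathbf 1_{B_n}-\nu(B_n)\mathbf 1_X$, pay the price of an error term $-\nu(B_n)$, and absorb it by your Cauchy--Schwarz bound~(ii), in which condition~(2) enters only through the measure of the support of $\pi(\mu^{*n})\mathbf 1_{B_n}$. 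This is a neat trade: you avoid Sierpi\'nski's theorem altogether, at the cost of the extra interpolation step. Second, for the return-probability estimate the paper simply cites Berg--Christensen \cite{Berg-Christensen} (and reduces in a preliminary step to $\mu$ of positive type so that the $\limsup$ is a genuine limit), whereas you give a self-contained spectral argument via a separating vector for the group von Neumann algebra. Your argument is correct; the only point that would merit a line of justification is the claim that $\mathbf 1_V$ is cyclic for $\lambda_G(G)'=\rho_G(G)''$ (true for any nonzero nonnegative $L^2$-function, but not quite a triviality). Citing \cite{Berg-Christensen} would be the more economical option here and would also let you drop the separate reduction to symmetric $\mu$, since their result already handles the $\limsup$.
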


\begin{rema}
The reader should notice that in condition $(2)$ above, the set $S^nB_n$ may fail to be measurable (see for example the construction, in \cite{ERDSTONE}, of subsets $K$ and $B$ of $\mathbb{R}$ such that $K$ is compact, $B$ is a $G_\delta$ - and therefore, Borel - but $K+B$ is not Borel). 
\end{rema}

We first notice a straightforward application of the proposition (which generalizes \cite[Theorem 3.6, p. 77]{PinPitExactRate}):  if  $(X,\nu)$ is an atomless probability space, $G$ a \emph{discrete} group, $G\curvearrowright X$ a measure-preserving action, 
and $\mu$ a probability measure on $G$, then 
$$\Vert\pi_0(\mu)\Vert\geq\Vert\lambda_G (\mu) \Vert.$$
Indeed, in an atomless probability space, there exist measurable sets of arbitrary small nonzero measure (see  Theorem \ref{Sierpinski} below).
The discretness of $G$  implies that any compact
subset $S$ of $G$ is finite. Hence for any measurable set $B_n$ of $X$ of small measure, namely such that $\nu(B_n)<\frac{1}{2|S|^{n}}$, we have $\nu(S^nB_n)\leq|S|^n\nu(B_n)<\frac{1}{2}$.
The discreetness of $G$ also implies that the singleton $F=\{e\}$ is a neighborhood of $e$.  In the case $F=\{e\}$, Condition (3) is obviously true for any sequence of measurable sets of strictly positive measures. 

The conclusion of Proposition \ref{mainthm discrepance localement compact} is sharp for all countable groups. More precisely, it is  established in \cite[Corollary 3.12]{PinPitExactRate}, that for every countable group $G$, there is a measurable action (namely, the Bernoulli shift on $G$) such that for each finitely-supported probability measure $\mu$ on $G$, 
$\Vert \lambda_G(\mu) \Vert  = \delta(\mu)$.

We do not know if every locally compact Hausdorff group admits an action with a measure with discrepancies equal to the lower bounds of Proposition \ref{mainthm discrepance localement compact}.

In Proposition \ref{prop bons ouverts} below, we invoke Definition \ref{Moderate growth} and prove that Proposition \ref{mainthm discrepance localement compact} implies the following theorem.

\begin{thm}\label{thm: orbites negligeables}
Let $X$ be a topological space, $\nu$ be an atomless Borel probability measure on $X$, and $G \curvearrowright X$ a continuous action of a locally compact Hausdorff unimodular group that preserves $\nu$. We assume the corresponding Koopman representation $\pi_0$ is strongly continuous.  

Suppose there exists a point $x_0$ in the support of $\nu$ such that for any compact subset $K$ of $G$ we have $\nu(Kx_0) = 0$ and $G_{x_0}=\Stab_G(x_0)\mbox{ is compact}$.
Then, any  finite positive regular Borel measure $\mu$ on $G$ satisfies:
$$\Vert\pi_0(\mu)\Vert\geq\Vert\lambda_G (\mu) \Vert.$$
\end{thm}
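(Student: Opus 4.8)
The plan is to deduce Theorem~\ref{thm: orbites negligeables} from Proposition~\ref{mainthm discrepance localement compact} by verifying its three hypotheses. Fix a compact symmetric neighbourhood $S$ of $e$. The natural candidates for the sets $B_n$ are ``metric balls'' of small radius around $x_0$: more precisely, we should introduce a word metric on $G$ relative to $S$ (so that $S^n$ is the ball of radius $n$) and then push this structure into $X$ via the orbit map $g\mapsto gx_0$, using continuity of the action and the fact that $x_0\in\supp\nu$ to guarantee the relevant sets have strictly positive measure. First I would choose, for each $n$, a measurable set $B_n$ with $S^nB_n$ contained in a set of measure $<\tfrac12$: this is where the hypothesis $\nu(Kx_0)=0$ for every compact $K$ enters. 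Indeed, $S^{2n}x_0$ is compact, hence $\nu(S^{2n}x_0)=0$; by outer regularity and continuity of the action one can find a small open neighbourhood $V$ of $x_0$ such that $\nu(S^{2n}V)<\tfrac12$ (here compactness of $S^{2n}$ and a standard tube/compactness argument let us pass from the single point to a neighbourhood). Taking $B_n=S^nV\cap(\text{something of positive measure near }x_0)$, or simply $B_n$ a positive-measure subset of $V$, gives $\nu(B_n)>0$ and $S^nB_n\subset S^{2n}V$, so Condition~(2) holds.

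The heart of the matter is Condition~(3), which asks for a neighbourhood $F$ of $e$ with $\bigl(\inf_{g\in F}\nu(gB_n\cap B_n)/\nu(B_n)\bigr)^{1/n}\to 1$ (or $\limsup\ge 1$). This is exactly where the unimodularity and compact-stabilizer hypotheses are used, and it is the main obstacle. The strategy, following the ``volume approximation via nets'' technique from geometric group theory alluded to in the introduction, is: work with a net, i.e. a maximal $\varepsilon$-separated subset $N_n$ of the ball $S^n$ with respect to a left-invariant metric, so that translates $\{wS\}_{w\in N_n}$ cover $S^n$ and have bounded overlap. Because $G$ is unimodular, $\mu_G(wU)=\mu_G(U)$ for all $w$, so counting lattice-like points $N_n$ faithfully measures volume: $|N_n|\asymp\mu_G(S^n)/\mu_G(S)$. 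One then builds $B_n$ as a disjoint union over $w\in N_n$ of small ``cells'' in $X$ coming from pieces of orbits, arranged so that for $g$ in a fixed small neighbourhood $F$ (chosen so that $gS\subset S^2$, say, using compactness of the stabilizer to ensure the orbit map is ``locally proper'' — finite-to-one up to the compact group $G_{x_0}$), the translate $gB_n$ only loses the cells indexed by $w$ near the boundary sphere $S^n\setminus S^{n-1}$. The ratio $\nu(gB_n\cap B_n)/\nu(B_n)$ is then $\ge 1-|N_n\cap\partial|/|N_n|\ge 1-\mu_G(S^{n+1}\setminus S^{n-1})/\mu_G(S^n)$ times a constant; taking $n$-th roots and using $\limsup_n\bigl(\mu_G(S^{n+1})/\mu_G(S^n)\bigr)^{1/n}=1$ — which holds because $\mu_G(S^n)$ is submultiplicative in $n$, hence grows at most exponentially and the successive ratios cannot be bounded away from $1$ along a subsequence — yields Condition~(3). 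Condition~(1) is automatic from the construction since each cell has positive measure (again $x_0\in\supp\nu$).

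I expect the delicate points to be: (a) making the ``cell'' construction in $X$ rigorous when the action is merely continuous and measure-preserving, not free — here the compactness of $G_{x_0}$ is what salvages the argument, since the orbit $Gx_0\cong G/G_{x_0}$ still carries a well-behaved quotient metric and the map $G\to Gx_0$ is proper; (b) ensuring measurability of the $B_n$ while $S^nB_n$ need not be measurable (the Remark after Proposition~\ref{mainthm discrepance localement compact} flags exactly this, and it is handled by inner/outer regularity — we only need $S^nB_n$ to be \emph{contained in} a measurable set of measure $<\tfrac12$, not to be measurable itself); and (c) the volume estimate $\limsup_n(\mu_G(S^n))^{1/n}\cdot(\mu_G(S^{n-1})/\mu_G(S^n))\ge$ something useful, i.e. extracting a subsequence along which boundary-to-bulk ratios decay — a standard but essential Følner-type extraction for the submultiplicative sequence $\mu_G(S^n)$. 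Once all three conditions are in place, Proposition~\ref{mainthm discrepance localement compact} applies verbatim and gives $\Vert\pi_0(\mu)\Vert\ge\Vert\lambda_G(\mu)\Vert$ for every finite positive regular Borel measure $\mu$ on $G$, which is the assertion of the theorem.
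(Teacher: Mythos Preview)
Your setup for Conditions~(1) and~(2) matches the paper (take $B_n=S^nV_n$ with $V_n$ a neighbourhood of $x_0$ small enough that $\nu(S^{2n}V_n)<\tfrac12$), but your plan for Condition~(3) has a genuine gap. You propose the bound $\nu(gB_n\cap B_n)/\nu(B_n)\gtrsim 1-\mu_G(S^{n+1}\setminus S^{n-1})/\mu_G(S^n)$ and then a ``F{\o}lner-type extraction along which boundary-to-bulk ratios decay''. For non-amenable $G$ (free groups, $\sln_2(\mathbb R)$, \dots) this fails outright: $\mu_G(S^{n+1}\setminus S^{n-1})/\mu_G(S^n)$ is bounded away from~$0$ --- and typically exceeds~$1$ --- for \emph{every} $n$, so your lower bound is negative and no useful subsequence exists. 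Submultiplicativity of $n\mapsto\mu_G(S^n)$ bounds successive ratios only from above; it never forces them toward~$1$.

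The paper's fix is to aim not for a ratio tending to~$1$ but for a ratio bounded below by a \emph{fixed positive constant} $c_1>0$ independent of $n$; then $c_1^{1/n}\to 1$ gives Condition~(3) directly, with no extraction. One first refines $V_n$ (using compactness of $\Stab_G(x_0)$ and a separation lemma) so that $gV_n\cap V_n=\emptyset$ for all $g\in S^{2n-2}\setminus\Stab_G(x_0)S$; taking $F:=S$, this yields $gB_n\cap B_n\supset S^{n-1}V_n$ for every $g\in F$. A maximal net $N_1\subset S^{n-2}$ separated with respect to the compact set $S\Stab_G(x_0)S^2$ makes the translates $\{wSV_n:w\in N_1\}$ pairwise disjoint, hence $\nu(S^{n-1}V_n)\ge|N_1|\,\nu(SV_n)$; a fine covering net $N_2\subset S^n$ gives $\nu(S^nV_n)\le|N_2|\,\nu(SV_n)$. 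Thus the ratio is $\ge|N_1|/|N_2|$, and a two-sided volume comparison --- packing by left-translates versus covering by right-translates, which is exactly where unimodularity enters --- shows $|N_1|/|N_2|\ge c_1$ uniformly in $n$. The relevant surviving fraction is therefore $\mu_G(S^{n-2})/\mu_G(S^n)$, always positive and bounded below, not $1-\mu_G(\partial S^n)/\mu_G(S^n)$; no F{\o}lner phenomenon is invoked.
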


The following corollary is an application of the theorem (see Subsection  \ref{proofs of the corollaries} below for details).

\begin{coro}\label{coro: actions sur reseaux} Let $G$ be a locally compact Hausdorff group, $\mu_G$ be a left Haar measure on $G$, and $H$ a unimodular closed subgroup of $G$ such that $\mu_G(H) = 0$. Let $\Gamma$ be a lattice in $G$, such that $H \cap \Gamma = \{e\}$. Let $\mu_{G/\Gamma}$ be the unique $G$-invariant Borel regular probability measure on $G/\Gamma$. We consider the action $H \curvearrowright G/\Gamma$, that preserves $\mu_{G/\Gamma}$.
Then, any  finite positive regular Borel measure $\mu$ on $H$ satisfies:
$$\Vert\pi_0(\mu)\Vert\geq\Vert\lambda_H (\mu) \Vert.$$
\end{coro}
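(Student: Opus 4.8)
\textbf{Plan for the proof of Corollary \ref{coro: actions sur reseaux}.} The plan is to deduce the corollary directly from Theorem \ref{thm: orbites negligeables}, applied to the group $H$ acting on the space $X = G/\Gamma$ with the measure $\nu = \mu_{G/\Gamma}$. First I would check the structural hypotheses of the theorem: $H$ is locally compact Hausdorff (as a closed subgroup of $G$) and unimodular by assumption; the action $H \curvearrowright G/\Gamma$ by left translations is continuous and preserves $\mu_{G/\Gamma}$, since the latter is $G$-invariant hence a fortiori $H$-invariant; and $\mu_{G/\Gamma}$ is atomless because $G/\Gamma$ carries a transitive action of the non-discrete group $G$ (if a point had positive measure, by invariance every point would, forcing $G/\Gamma$ to be finite and $\Gamma$ to be cocompact and open, i.e. $G$ discrete — one can also argue via the fact that a lattice quotient of a non-discrete locally compact group has no atoms). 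The strong continuity of the Koopman representation $\pi_0$ of $H$ on $L^2_0(G/\Gamma,\mu_{G/\Gamma})$ should be addressed too; in the separable/$\sigma$-compact setting it follows from Remark \ref{remarks about Koopman}(2), and in general one invokes the standing convention that Koopman representations are taken to be strongly continuous, exactly as in the statement of Theorem \ref{thm: orbites negligeables}.

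The heart of the verification is to produce the required point $x_0 \in \supp(\mu_{G/\Gamma})$ with compact $H$-stabilizer and with $\mu_{G/\Gamma}(Kx_0) = 0$ for every compact $K \subset H$. I would take $x_0 = e\Gamma$, the image of the identity coset. Its support membership is automatic: $\supp(\mu_{G/\Gamma}) = G/\Gamma$ since the measure is $G$-invariant and $G$ acts transitively (the support is a nonempty closed $G$-invariant set). For the stabilizer: $\Stab_H(e\Gamma) = H \cap \Gamma = \{e\}$ by hypothesis, which is certainly compact. It remains to show that for each compact $K \subseteq H$, the orbit piece $Kx_0 = \{k\Gamma : k \in K\} \subseteq G/\Gamma$ is $\mu_{G/\Gamma}$-null. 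Here is where I expect the real work. The set $Kx_0$ is the image of the compact set $K\Gamma \subseteq G$ under the quotient map $p : G \to G/\Gamma$, or more precisely $p^{-1}(Kx_0) = K\Gamma$. I would relate $\mu_{G/\Gamma}(Kx_0)$ to the Haar measure $\mu_G$ of $K$ using the Weil integration formula / the relation between $\mu_G$, the counting measure on $\Gamma$, and $\mu_{G/\Gamma}$: choosing a Borel fundamental domain $D$ for $\Gamma$ in $G$, one has $\mu_{G/\Gamma}(Kx_0) = \frac{1}{\mu_G(D)}\,\mu_G\big( (K\Gamma) \cap D' \big)$-type expressions, and since $K\Gamma = \bigcup_{\gamma \in \Gamma} K\gamma$ is a countable union of left translates of $K$, and $\mu_G(K\gamma) = \mu_G(K) = 0$ because $K \subseteq H$ and $\mu_G(H) = 0$, countable subadditivity of $\mu_G$ gives $\mu_G(K\Gamma) = 0$, hence $\mu_{G/\Gamma}(Kx_0) = 0$. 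The main obstacle is making this pushforward/fundamental-domain argument clean in the locally compact (not necessarily discrete $\Gamma$-action being proper, though lattices do act properly discontinuously) setting — in particular ensuring measurability of $K\Gamma \cap D$ and correctly invoking the normalization of $\mu_{G/\Gamma}$; one should cite the standard theory of invariant measures on $G/\Gamma$ (e.g. from Bourbaki or Raghunathan) rather than reprove it.

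Once these hypotheses are checked, Theorem \ref{thm: orbites negligeables} applies verbatim to $H \curvearrowright G/\Gamma$ and yields, for every finite positive regular Borel measure $\mu$ on $H$, the inequality $\|\pi_0(\mu)\| \geq \|\lambda_H(\mu)\|$, which is exactly the conclusion of the corollary. I would also remark that the hypothesis $\mu_G(H) = 0$ is automatically satisfied when $G$ is a Lie group and $H$ a closed subgroup of strictly smaller dimension (a positive-dimensional-codimension submanifold is Lebesgue-null), which is how the corollary gets used in Proposition \ref{exemple optimal}; if desired I would state this special case explicitly. The only subtlety worth flagging for the reader is that the measure-zero condition on the orbit, which Theorem \ref{thm: orbites negligeables} requires and which the counterexamples in Section \ref{contreex} show to be genuinely necessary, is here guaranteed precisely by the smallness of $H$ inside $G$ together with the discreteness of $\Gamma$.
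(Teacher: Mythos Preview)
Your proposal is correct and follows exactly the same approach as the paper: apply Theorem~\ref{thm: orbites negligeables} to the action $H\curvearrowright G/\Gamma$ with $x_0=e\Gamma$, noting that $\Stab_H(e\Gamma)=H\cap\Gamma=\{e\}$ is compact and that the pieces $Kx_0$ are null. In fact you are more thorough than the paper's own proof, which simply asserts that ``the action is continuous, its orbits are null-sets, and the stabilizer $H\cap\Gamma$ of $e\Gamma$ is finite, hence compact'' without spelling out atomlessness, strong continuity, or the reason why $\mu_{G/\Gamma}(Kx_0)=0$; your argument via $p^{-1}(Kx_0)=K\Gamma=\bigcup_{\gamma\in\Gamma}K\gamma$, each right-translate having $\mu_G$-measure zero since $K\subset H$ and $\mu_G(H)=0$, is exactly what is implicitly intended.
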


Recall that if $G$ is amenable and $\mu$ is a regular  symmetric probability measure, whose support generates $G$, then  $\Vert \lambda_G(\mu) \Vert  = 1,$  (see \cite{Kes} for the case $G$ is discrete and \cite{PIER} for the general case). Hence,  if $G$ is discrete and amenable, $(X,\nu)$ is an atomless probability space and $G \curvearrowright X$ is a measurable action which preserves $\nu$, then the discrepancy of any symmetric probability measure on $G$ equals $1$ (see {\cite[Corollary 3.10]{PinPitExactRate}}). In the same vein, since $\mathbb{R}$ is unimodular and amenable, Theorem \ref{thm: orbites negligeables} implies that for every continuous $\mathbb{R}$-action by measure-preserving transformations with negligible orbits, the discrepancy of any compactly-supported, nonnegative, continuous function is at least $1$. For example, this yields a proof of the following folklore statement (although the result is known to experts in the field, we have not been able to localize a published proof): let $S$ be a compact, hyperbolic surface. Consider $g_t : T_1S \rightarrow T_1S$ the geodesic flow on the unit tangent bundle, and let $\nu$ be the Liouville measure on $T_1S$. Then, for every $T \in \rr^*$, \[\sup_{\substack{\phi \in L^2(T^1S,\nu)\\ \Vert \phi \Vert_2 = 1}} \left\Vert \left(x \longrightarrow \frac{1}{T}\int^T_0 \phi(g_tx) \diff t - \int_{T_1S} \phi \diff \nu\right)\right\Vert_2 = 1.\]
The same proof applies to the geodesic flow of a non-compact locally symmetric space.

There are non-amenable examples where the inequality in Corollary \ref{coro: actions sur reseaux} is an equality, and we describe one of them in Section \ref{sect: exemple de Zimmer}.

\section{Interpretation of the technical condition in Proposition \ref{mainthm discrepance localement compact}}\label{heuristic}

We give a heuristic example to illustrate the technical condition, i.e. Condition (3), in Proposition \ref{mainthm discrepance localement compact} that we postulate in order to be able to treat actions of locally compact non-discrete groups.

\begin{exe} Let us consider the action of $\rr$ on $\ttt^2 := (\rr/\zz)^2$ by translations along a line $d$ with irrational slope; that is, let $\alpha \in \rr\setminus\mathbb Q$, and define, for all $t \in \rr$, $(a,b) \in \ttt^2$, \[(a+t,b + \alpha t).\] Let $F := [-1,1]$.
On Figure 1, we represent $\mathbb{R}^2$ twice, and on each of the two pictures, we identify $\mathbb{T}^2$ with $[0,1]^2$.
Let us consider two isometric long rectangles, centered at the origin, in $\rr^2$, such that two opposite sides are parallel to $d$. Notice that the two opposite sides parallel to $d$ are short in the case of the red rectangle and are long in the case of the blue rectangle.
Consider conditions $(1),(2),(3),$ from Proposition \ref{mainthm discrepance localement compact}. Let $g$ be a small translation (i.e. with amplitude in $F$) along the line $d$ (drawn in black with a small arrow suggesting the action of $g$). Let $R_{red}$ be  the projection of the red rectangle, and $R_{blue}$ be  the projection of the blue rectangle. These two projected rectangles represent possible choices of $B_n's$ in the statement of Proposition \ref{mainthm discrepance localement compact}. One can graphically expect that $gR_{red} \cap R_{red}$ is quite small, whereas $gR_{blue} \cap R_{blue}$ is approximately as big as $R_{blue}$. One can therefore think of sequences $(B_n)_n$ that satisfy  Condition (3) in Proposition 2 as subsets that look like $R_{blue}$.
\end{exe}

\begin{figure}[h!]
\label{fig croissance moderee}
\centerline{
\includegraphics[scale=0.5]{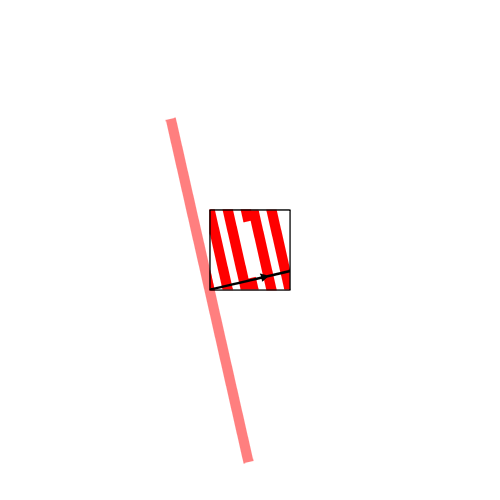}
\includegraphics[scale=0.5]{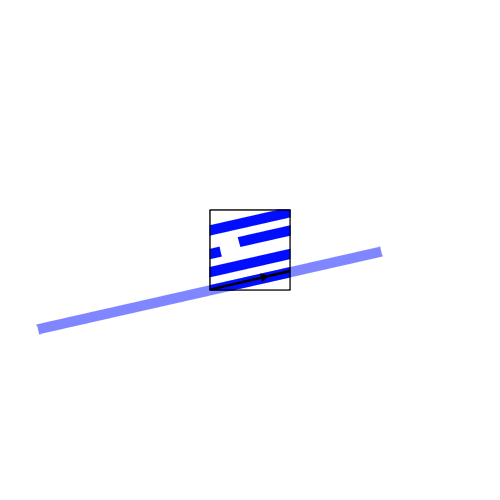}
}
\caption{The opaque-coloured rectangles are the projections of the transparent ones. The segment in black is a subset of $d$.}
\end{figure}

\section{Proofs}

\label{demo thm}

We first recall a theorem of Sierpiński on atomless probability spaces we will need in the proof of Proposition \ref{mainthm discrepance localement compact}.

\begin{thm}[See \cite{Sierpinski}]\label{Sierpinski}

Let $(X,\mathcal{B},\nu)$ be an atomless probability space. Then $(X,\nu)$ has the \emph{intermediate value property}, that is: for all measurable subsets $A,B$ of $X$ such that $A \subset B$, for all $c \in [\nu(A),\nu(B)]$, there exists a measurable subset $C$ of $X$ such that \begin{itemize}
    \item $A \subset C \subset B$;
    \item $\nu(C) = c$.
\end{itemize}
\end{thm}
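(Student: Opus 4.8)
My plan is to reduce the statement to finding a subset of prescribed measure inside a single set, and then to construct that subset by a greedy exhaustion driven by an auxiliary small-set lemma.

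\emph{Reduction.} Since $A\subseteq B$, the set $D:=B\setminus A$ is measurable with $\nu(D)=\nu(B)-\nu(A)$, and the hypothesis $c\in[\nu(A),\nu(B)]$ gives $t:=c-\nu(A)\in[0,\nu(D)]$. If I can find a measurable $C'\subseteq D$ with $\nu(C')=t$, then $C:=A\cup C'$ is measurable, satisfies $A\subseteq C\subseteq B$ because $C'\subseteq D=B\setminus A$, and by disjointness $\nu(C)=\nu(A)+\nu(C')=c$. Thus it suffices to prove: \emph{for every measurable $E$ and every $t\in[0,\nu(E)]$ there is a measurable $C'\subseteq E$ with $\nu(C')=t$.} The extreme cases $t=0$ and $t=\nu(E)$ are handled by $C'=\emptyset$ and $C'=E$, so I may assume $0<t<\nu(E)$.

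\emph{Small sets.} The key elementary input is that in an atomless space every measurable $F$ with $\nu(F)>0$ contains, for each $\varepsilon>0$, a measurable subset of measure in $(0,\varepsilon)$. Indeed, since $F$ is not an atom there is $F_1\subseteq F$ with $0<\nu(F_1)<\nu(F)$; replacing $F_1$ by $F\setminus F_1$ if necessary, $F$ has a subset of positive measure at most $\nu(F)/2$. Iterating this construction inside the set just produced yields subsets of positive measure at most $\nu(F)/2^k$, and choosing $k$ large enough gives a subset of measure below $\varepsilon$.

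\emph{Greedy construction.} I will build an increasing sequence $\emptyset=C_0\subseteq C_1\subseteq\cdots\subseteq E$ with $\nu(C_n)\le t$ for all $n$ and $\nu(C_n)\to t$; then $C':=\bigcup_n C_n$ has $\nu(C')=t$ and lies in $E$. Write $r_n:=t-\nu(C_n)\ge 0$ and $M_n:=E\setminus C_n$, so that $\nu(M_n)=\nu(E)-\nu(C_n)\ge t-\nu(C_n)=r_n$. If $r_n=0$ the construction stops successfully. If $r_n>0$, then $\nu(M_n)>0$, and the small-set lemma shows that $t_n:=\sup\{\nu(G):G\subseteq M_n,\ \nu(G)\le r_n\}>0$, so I may choose $G_n\subseteq M_n$ with $\nu(G_n)\le r_n$ and $\nu(G_n)>t_n/2$, and set $C_{n+1}:=C_n\cup G_n$. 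This keeps $\nu(C_{n+1})\le t$, and since $r_{n+1}=r_n-\nu(G_n)$ the sequence $(r_n)$ is non-increasing, hence convergent to some $r_\infty\ge 0$.

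\emph{The main obstacle} is to rule out $r_\infty>0$, that is, to be sure the greedy steps actually exhaust the budget $t$; this is exactly where atomlessness must be used a second time. Suppose $r_\infty>0$. Telescoping $r_n-r_{n+1}=\nu(G_n)>t_n/2$ together with $\sum_n(r_n-r_{n+1})=r_0-r_\infty<\infty$ forces $t_n\to 0$. On the other hand, the sets $M_n$ decrease to $M_\infty:=E\setminus\bigcup_n C_n$, and by continuity from above $\nu(M_\infty)=\nu(E)-(t-r_\infty)\ge r_\infty>0$, so by the small-set lemma $M_\infty$ contains a measurable $G$ with $0<\nu(G)<r_\infty$. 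Since $G\subseteq M_\infty\subseteq M_n$ and $\nu(G)<r_\infty\le r_n$ for every $n$, the value $\nu(G)$ is admissible in each supremum, whence $t_n\ge\nu(G)>0$ for all $n$, contradicting $t_n\to 0$. Therefore $r_\infty=0$, so $\nu(C_n)\to t$, the set $C'=\bigcup_n C_n$ has measure $t$, and the reduction above yields the desired $C$, completing the proof.
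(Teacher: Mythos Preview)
Your argument is correct. Note, however, that the paper does not actually supply a proof of this theorem: it attributes the result to Sierpi\'nski and only adds, in a parenthetical remark, that a stronger version (an order-preserving family $c\mapsto f(c)$ of intermediate sets) can be obtained via Zorn's lemma. So there is little in the paper to compare against. Your route---reduce to finding a subset of prescribed measure inside $B\setminus A$, isolate the ``small-set'' lemma that atomlessness yields subsets of arbitrarily small positive measure, then run a greedy exhaustion and rule out a positive limiting deficit by exhibiting a fixed admissible piece inside $M_\infty$---is one of the standard elementary proofs and uses only dependent choice. The Zorn approach the paper alludes to trades this elementarity for the stronger monotone-in-$c$ conclusion; both arguments ultimately rest on the same small-set lemma you isolated.
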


(A stronger statement can be proved using Zorn's lemma: for all such $A,B$, there exists a map $f : [\nu(A),\nu(B)] \rightarrow \mathcal{B}$ such that $f$ is $(\leq,\subset)$-order-preserving and for all $c \in [\nu(A),\nu(B)]$, $\nu(f(c)) = c$.)

\subsection{Proof of Proposition \ref{mainthm discrepance localement compact}}

\label{grosthm}

We first state the following definition.

\begin{defi}[Moderate growth]\label{Moderate growth}
Let $G$ be any group, $(X,\cali{B},\nu)$ an atomless probability space, and $G \curvearrowright X$ a measure-preserving action. Let $F,S \subset G$ be measurable subsets.
We say that a sequence $(B_n)_{n \in \nn}$ of measurable subsets of $X$ is of $(S,F)$-\textbf{moderate growth} if 
\begin{enumerate}
\item $\forall n \in \nn, \ \nu(B_n) > 0$, 
\item $\displaystyle\inf \{\nu(B) \tq B \in \cali{B}, \  S^n B_n \subset B\}  < \frac{1}{2}$,
\item $\displaystyle\limsup_{n \to \infty} \left(\inf_{g \in F}\fra{\nu(gB_n\cap B_n)}{\nu(B_n )}\right)^{\frac{1}{n}} \geq 1$.
\end{enumerate}
\end{defi}

Proposition \ref{mainthm discrepance localement compact} therefore states that if for any compact symmetric neighborhood $S$ of $e$ in $G$, there exists a  neighborhood $F$ of $e$ in $G$ and an $(S,F)$-moderate growth sequence, then Inequality (\ref{lower bound}) holds.

\begin{proof}[Proof of Proposition \ref{mainthm discrepance localement compact}]
We prove the proposition in several steps.

Step 1: we assume \emph{$\mu$  is of positive type, with support $S$ a compact symmetric neighborhood of the identity $e\in G$}. As $G$ is locally compact, we may assume the neighborhood $F$ of $e$ given by the hypothesis of the proposition is compact. Let $(B ^+_n)_{n\in \nn}$ be a sequence of $F$-moderate growth.
Atomless probability spaces satisfy the so-called \emph{intermediate value property} (see Theorem \ref{Sierpinski} above).
Hence, there exists a subset $B^-_n$ that has the same $\nu$-measure as $B_n$, and such that 
\begin{equation}\label{b+b- disjoint}
    S^n B_n \cap B^-_n = \emptyset,
\end{equation}
and as $S$ is symmetric the same equality also holds with $B_n$ and $B^-_n$ exchanged.
Let \[\phi := \frac{\ungra_{B_n} - \ungra_{B^-_n}}{\Vert \ungra_{B_n} - \ungra_{B^-_n}\Vert_2}.\] The function $\phi$ is obviously in $L^2_0(X,\nu)$.
Let $\mu^{*n}$ be the $n$-fold convolution of $\mu$ with itself. Notice that 
$$S^n=(\mbox{support}(\mu))^n=\mbox{support}(\mu^{*n}).$$ 
We therefore have,
\[\begin{array}{rcl}
\Vert \pi_0(\mu^{*n}) \Vert &\geq &\langle \pi_0(\mu^{*n})\phi,\phi\rangle\\
&= &\displaystyle\frac{1}{\nu(B_n)+\nu(B^-_n)}\int_{G}\left[\nu\left(g B_n\cap B_n\right)+\nu\left(g B^-_n\cap B^-_n\right)\right] \diff \mu^{*n}(g)\\
&&- \displaystyle\frac{1}{\nu(B_n)+\nu(B^-_n)}\int_{G}\left[\nu\left(g B_n\cap B^-_n\right)+\nu\left(g B^-_n\cap B_n\right)\right] \diff \mu^{*n}(g)\\
&\stackrel{\eqref{b+b- disjoint}}{=} &\displaystyle\frac{1}{\nu(B_n)+\nu(B^-_n)}\int_{G}\left[\nu\left(g B_n\cap B_n\right)+\nu\left(g B^-_n\cap B^-_n\right)\right] \diff \mu^{*n}(g)\\
&\geq &\displaystyle\frac{1}{\nu(B_n)+\nu(B^-_n)}\int_{F}\left[\nu\left(g B_n\cap B_n\right)+\nu\left(g B^-_n\cap B^-_n\right)\right] \diff \mu^{*n}(g)\\
&\geq &\displaystyle\frac{1}{\nu(B_n)+\nu(B^-_n)} \inf_{g \in F} \nu(gB_n \cap B_n) \int_F \diff \mu^{*n}(g)\\
&= &\displaystyle \frac{1}{2}\mu^{*n}(F)\inf_{g \in F}\displaystyle\frac{ \nu(gB_n \cap B_n)}{\nu(B_n)}.\\
\end{array}\]
We know that \[\Vert \pi_0(\mu) \Vert = \limsup_{n \to \infty} \Vert \pi_0(\mu)^n\Vert^{\frac{1}{n}}.\] As $\mu$ is symmetric and $F$ is a compact neighborhood of the identity, it follows from \cite{Berg-Christensen}, that 
\[\limsup_{n \to \infty} (\mu^{*n}(F))^{\frac{1}{n}} = \Vert \lambda_G(\mu) \Vert.\]
Furthermore, the hypothesis that $\mu$ is of positive type implies that the above lim sup is in fact a genuine limit \cite{Berg-Christensen}.
We can now finish the proof of Step 1: \[\begin{array}{rcl}
\Vert \pi_0(\mu) \Vert &= &\limsup_{n \to \infty} \Vert \pi_0(\mu)^n\Vert^{\frac{1}{n}}\\
&\geq &\limsup_{n \to \infty} \frac{1}{2}^\frac{1}{n} \mu^{*n}(F)^\frac{1}{n} \left(\inf_{g \in F}\displaystyle\frac{ \nu(gB_n \cap B_n)}{\nu(B_n)}\right)^\frac{1}{n}\\
&=&\Vert \lambda_G (\mu) \Vert \limsup_{n \to \infty} \left(\inf_{g \in F}\displaystyle\frac{ \nu(gB_n \cap B_n)}{\nu(B_n)}\right)^\frac{1}{n}\\
&\geq &\Vert \lambda_G (\mu) \Vert.
\end{array}\]

Step 2: we assume \emph{the  support of $\mu$ is a compact  neighborhood of the identity}. The measure $\eta=\mu*\mu^*$ satisfies the hypothesis of Step 1. For any unitary representation $\alpha$ of $G$, $\|\alpha(\eta)\|=\|\alpha(\mu)\|^2$. Hence Step 2 follows from Step 1. 

Step 3: we assume \emph{the  support of $\mu$ is a neighborhood of the identity}. As $G$ is locally compact we may choose $K_1$, a compact neighborhood of the identity, included in the support of $\mu$. As $\mu$ is regular and finite, we may choose a increasing sequence  $K_n\subset K_{n+1}$ of compact subsets of $G$ such that $\mu(K_n)$ converges to $\mu(G)$ as $n\to\infty$, and approximate $\mu$ with the truncations $\mu_n={\bold 1}_{K_n}\cdot\mu$.  As $\mu$ and $\mu_n$ are finite regular we may apply \cite[6. Opérateurs de Radon]{Godement} to get, for any  unitary representation $\alpha$ of $G$,
\[
|\|\alpha(\mu)\|-\|\alpha(\mu_n)\||\leq\mu(G)-\mu(K_n).
\]
Step 2 applies to $\mu_n$ for any $n$. Hence Step 3 follows from Step 2.

Step 4: we assume \emph{the support of the measure $\mu$ has positive Haar measure}.  We consider $\eta=\mu*\mu^*$. As the Haar measure of the support of $\mu$ is strictly positive,
\[
	\mbox{support}(\eta)=\overline{(\mbox{support}(\mu))\cdot(\mbox{support}(\mu))^{-1}}
\]
is a neighborhood of the identity according to \cite[page 50]{Weil}. (In order to prove the inclusion $\mbox{support}(\mu*\mu^*)\subset\overline{\mbox{support}(\mu)\cdot\mbox{support}(\mu^*)}$,
we use that $\mu$ is inner regular. The other inclusion is true for any Borel measure on any topological group.)  Again, for any unitary representation $\alpha$ of $G$, $\|\alpha(\eta)\|=\|\alpha(\mu)\|^2$. Hence Step 4  follows from Step 3. 

Step 5: we assume the measure $\mu$ is as in the proposition. We choose a compact neighborhood $K$ of $e\in G$ and define, for any integer $n\in\mathbb N$, the  measure $\mu_n=\mu+\frac{1}{n}{\bold 1}_{K}\cdot\mu_G$. By construction, the support of $\mu_n$ has positive Haar measure. Moreover, $\lim_{n\to\infty}|\mu_n(G)-\mu(G)|=0$. Hence Step 5  follows from Step 4.  

\end{proof}

\subsection{Nets and volume estimates}

The goal of this subsection is to establish counting inequalities that allow us to estimate the cardinal of maximal nets, that is, finite sets of evenly-spaced points, in big subsets (that can be thought as balls). The precise technical statement that will be used later in the proofs is Lemma \ref{lem estimation cardinal des filets}.

\begin{lem}\label{lemme recouvrement fini a droite} Let $G$ be a topological group, $A$ and $B$ two compact subsets, such that $B$ is a neighborhood of $e$. Then, for all $k \in \nn$, there exists a finite subset $S \subset G$ such that
we have, for all $n \in \mathbb{N}^*$, \[B^{k+n-1}A \subset B^n S.\]
\end{lem}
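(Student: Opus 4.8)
The plan is to exploit the elementary regrouping $B^{k+n-1} = B^{n-1}\cdot B^{k}$ of the power appearing on the left-hand side: this lets us shift all of the ``covering work'' onto the single, $n$-independent compact set $B^{k}A$, rather than onto the growing set $B^{k+n-1}A$. Once this is observed, the lemma reduces to one application of compactness.

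Concretely, I would first note that $B^{k}A$ is compact: it is the image of the finite product of compact sets $\underbrace{B\times\cdots\times B}_{k}\times A$ under the continuous iterated multiplication map $G^{k+1}\to G$ (reading $B^{0}=\{e\}$, so $B^{0}A=A$, when $k=0$). Next, using the hypothesis that $B$ is a neighbourhood of $e$, the interior $\mathring B$ contains $e$, so the family $\{\mathring B\,x : x\in B^{k}A\}$ is an open cover of $B^{k}A$ (each $x$ lies in $\mathring B\,x$ because $e\in\mathring B$). By compactness there are finitely many points $x_{1},\dots,x_{m}\in B^{k}A$ with
\[
B^{k}A\;\subset\;\bigcup_{i=1}^{m}\mathring B\,x_{i}\;\subset\;B\cdot S,\qquad S:=\{x_{1},\dots,x_{m}\}.
\]
The finite set $S$ is chosen here once and for all, with no dependence on $n$.

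It then remains to conclude: for every $n\in\mathbb{N}^{*}$ we have $n-1\ge 0$, hence, using associativity of set multiplication and $B^{a}B^{b}=B^{a+b}$,
\[
B^{k+n-1}A \;=\; B^{n-1}\cdot\bigl(B^{k}A\bigr)\;\subset\; B^{n-1}\cdot\bigl(B\cdot S\bigr)\;=\;B^{n}S,
\]
which is exactly the desired inclusion. I do not expect any genuine obstacle: the only point requiring insight is the decomposition $B^{k+n-1}=B^{n-1}\cdot B^{k}$, which trades the uniform-in-$n$ difficulty for a single compactness argument on the fixed set $B^{k}A$; everything else (compactness of finite products of compact sets, $e$ lying in the interior of a neighbourhood of $e$, basic set arithmetic) is routine.
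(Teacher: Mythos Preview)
Your proof is correct and follows essentially the same route as the paper: both cover the fixed compact set $B^{k}A$ by finitely many left-translates of an open neighbourhood of $e$ contained in $B$ (you use $\mathring{B}$, the paper an arbitrary such open $U$), obtain $B^{k}A\subset BS$, and then multiply on the left by $B^{n-1}$.
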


\begin{proof} Let $k \in \nn$. Let $U$ be an open set in $G$ containing $e$ and contained in $B$. Then the compact subset $B^kA$, is covered by the sets $Uh$ for $h \in B^{k}A$. Therefore, there is a finite subset $S$ of $B^{k}A$ such that $B^{k}A \subset US$. Since $US \subset BS$, we therefore have $B^{k}A \subset BS$, hence, for all $n \in \nn^*$, $B^{k+n-1}A \subset B^nS$.
\end{proof}

Note that in the following Lemma, we consider a right-Haar measure.

\begin{lem}\label{croissance geometrique} Let $G$ be a locally compact Hausdorff group, let $\mu_G$ be a right-Haar measure on $G$, let $A,B$ be compact subsets of $G$, such that $B$ is a neighborhood of  $e$. Then, for all $k \in \nn$, there is a constant $c$ such that \[\forall n\in \nn^*,\quad \mu_G(B^{n+k-1}A) \leq c\mu_G(B^n).\]
\end{lem}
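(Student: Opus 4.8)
The statement to prove is Lemma \ref{croissance geometrique}: for $G$ locally compact Hausdorff, $\mu_G$ a right-Haar measure, $A,B$ compact with $B$ a neighborhood of $e$, and $k\in\nn$ fixed, there is a constant $c$ with $\mu_G(B^{n+k-1}A)\leq c\mu_G(B^n)$ for all $n\in\nn^*$. The natural approach is to feed the covering statement of Lemma \ref{lemme recouvrement fini a droite} into the right-invariance of $\mu_G$. First I would invoke Lemma \ref{lemme recouvrement fini a droite} to obtain a \emph{finite} set $S=\{s_1,\dots,s_m\}\subset G$ such that $B^{k+n-1}A\subset B^nS$ for every $n\in\nn^*$. (Here I must be slightly careful about the index bookkeeping: the cited lemma is stated with $B^{k+n-1}A\subset B^nS$, which is exactly the form I want, so there is no shift to worry about.)

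Next I would estimate the measure of the right-hand side. Since $B^nS=\bigcup_{i=1}^m B^n s_i$, subadditivity gives
\[
\mu_G(B^{k+n-1}A)\leq \mu_G(B^nS)\leq \sum_{i=1}^m \mu_G(B^n s_i).
\]
Now right-invariance of $\mu_G$ yields $\mu_G(B^n s_i)=\mu_G(B^n)$ for each $i$, so the right side equals $m\,\mu_G(B^n)$. Setting $c:=m=|S|$ finishes the proof, and this $c$ indeed depends only on $k$ (and on $A,B,G$), not on $n$, as required. One should also note that $B^n$ is compact (a finite product of compacts), hence $\mu_G(B^n)<\infty$, so all quantities here are finite and the inequality is meaningful; similarly $B^{k+n-1}A$ is compact and thus measurable with finite measure.

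The argument is essentially a two-line consequence of the previous lemma plus invariance, so there is no real obstacle; the only point demanding attention is the choice of left- versus right-Haar measure. The lemma is deliberately phrased with a right-Haar measure precisely because $S$ appears on the \emph{right} in $B^nS$, so that the translates $B^n s_i$ all have the same measure. Had we used a left-Haar measure, $\mu_G(B^n s_i)$ would differ from $\mu_G(B^n)$ by the modular function $\Delta(s_i)$, and we would instead get $c=\sum_{i}\Delta(s_i)$, still a finite constant independent of $n$ — so even that variant would work, but the right-Haar formulation keeps the bookkeeping cleanest. I would therefore present the proof exactly in the clean right-invariant form above.
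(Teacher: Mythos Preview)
Your proof is correct and follows exactly the paper's approach: invoke Lemma \ref{lemme recouvrement fini a droite} to get a finite set $S$ with $B^{k+n-1}A\subset B^nS$ for all $n$, then use subadditivity and right-invariance of $\mu_G$ to conclude with $c=|S|$. The paper's own proof is the same argument stated in one line.
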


\begin{proof} The claim follows immediately from Lemma \ref{lemme recouvrement fini a droite} : it is enough to take $c := \vert S \vert$ where $S$ is a finite subset given by Lemma \ref{lemme recouvrement fini a droite}.
\end{proof}

\begin{defi}[Net] Let $G$ be any group. Let $A,B,N \subset G$. We say that $N$ is an $A$-separated net of $B$ if
\begin{enumerate}
\item $N$ is a finite subset of $B$ ;
\item $\forall n_1,n_2 \in N,\quad n_1A \cap n_2A \neq \emptyset \Rightarrow n_1 = n_2$.
\end{enumerate}
Let us denote by $N(B,A)$ the set of $A$-separated nets in $B$. We endow $N(B,A)$ with the order given by inclusion and denote by $N_{max}(B,A)$ the set of maximal nets (that is, nets that are not strictly contained in a bigger net).
\end{defi}

\begin{lem}\label{lem reunion filet} Let $G$ be any group, and let $A,B \subset G$. We have \[\forall N \in N_{max}(B,A),\quad B \subset NAA^{-1}.\]
\end{lem}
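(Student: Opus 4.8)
The plan is purely combinatorial and uses only the maximality of $N$; no topology or measure theory enters. Fix $b \in B$; the goal is to produce $n \in N$ and $a_1,a_2 \in A$ with $b = n a_1 a_2^{-1}$, that is, $b \in NAA^{-1}$. One may harmlessly assume $A \neq \emptyset$, which is the only case in which the statement is not vacuous, and note that then $e = a_0 a_0^{-1} \in AA^{-1}$ for any chosen $a_0 \in A$.

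First I would dispose of the case $b \in N$: here $b = b\, a_0 a_0^{-1} \in b AA^{-1} \subset NAA^{-1}$, so there is nothing to prove. Otherwise $N \cup \{b\}$ is a finite subset of $B$ that properly contains $N$, hence by maximality of $N$ it fails to be an $A$-separated net. It still trivially satisfies clause $(1)$ of the definition of a net (it is a finite subset of $B$), so clause $(2)$ must fail: there exist distinct $n_1, n_2 \in N \cup \{b\}$ with $n_1 A \cap n_2 A \neq \emptyset$.

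At this point the key bookkeeping observation is that $n_1$ and $n_2$ cannot both lie in $N$, since $N$ itself is an $A$-separated net and they are distinct; therefore exactly one of them equals $b$, and after relabelling we have $n := n_1 \in N$ and $n_2 = b$. Picking $x \in nA \cap bA$ and writing $x = n a_1 = b a_2$ with $a_1, a_2 \in A$, we obtain $b = n a_1 a_2^{-1} \in nAA^{-1} \subset NAA^{-1}$, which finishes the proof since $b$ was arbitrary.

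I do not expect any real obstacle here. The only mildly delicate points are the step that forces the separation-violating pair to involve $b$ — this is precisely where the hypothesis that $N$ was already a net is used — and the trivial remark that $e \in AA^{-1}$ requires $A$ to be nonempty, which is automatic in all later applications of the lemma.
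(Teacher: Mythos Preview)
Your proof is correct and is essentially the direct (contrapositive) version of the paper's argument: the paper assumes $b \notin NAA^{-1}$ and shows $N\cup\{b\}$ is still a net, contradicting maximality, while you start from maximality, force a separation violation in $N\cup\{b\}$, and deduce $b\in NAA^{-1}$. The only minor imprecision is the word ``vacuous'' for the $A=\emptyset$ case (for finite nonempty $B$ the statement is actually false, not vacuous), but as you note this never arises in the applications.
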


\begin{proof} Let $N \in N_{max}(A,B)$. Let us show that we have $B \subset NAA^{-1}$. We proceed by contradiction. Assume that $b \in B \setminus NAA^{-1}$. Let us show that $N \cup \{b\}$ is an  $A$-separated net of $B$. It is enough to check that for every  $n \in N$, $nA \cap bA = \emptyset$. Let $n \in N$. If $nA \cap bA \neq \emptyset$, there exists $a_1,a_2 \in A$ such that $na_1 = ba_2$. Let $a_1,a_2$ as such. Then $b = na_1a^{-1}_2 \in NAA^{-1}$, which is a contradiction. So $nA \cap bA = \emptyset$. Therefore, $N \cup \{b\}$ is an $A$-separated net of $B$, and this contradicts the maximality of $N$. So we have $B \subset NAA^{-1}$.
\end{proof}

\begin{lem}\label{cardinal filet mesure} Let $G$ be a locally compact Hausdorff group, and $\mu_G$ be a left-Haar measure on $G$. Let $A,B \subset G$ such that $A$  $B$ and $BA$ are measurable. We then have \[\forall N \in N(B,A), \quad \vert N \vert \mu_G(A) \leq \mu_G(BA),\]and\[
\forall N \in N_{max}(B,A), \quad \mu_G(B) \leq \vert N \vert \mu_G(AA^{-1}).\]
\end{lem}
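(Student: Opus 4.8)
The plan is to deduce both inequalities directly from the left-invariance of $\mu_G$, combined with the disjointness built into the definition of a net for the first inequality, and with the covering property of maximal nets established in Lemma \ref{lem reunion filet} for the second.

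For the first inequality, fix $N = \{n_1,\dots,n_k\} \in N(B,A)$. Each translate $n_iA$ is measurable, since left translation by $n_i$ is a $\mu_G$-preserving bijection of $G$, and by the defining property of an $A$-separated net the sets $n_1A,\dots,n_kA$ are pairwise disjoint. As $n_i \in B$ we have $n_iA \subset BA$, hence $\bigsqcup_{i=1}^k n_iA \subset BA$. Left-invariance gives $\mu_G(n_iA) = \mu_G(A)$ for every $i$, so by finite additivity and monotonicity
\[
|N|\,\mu_G(A) = \sum_{i=1}^k \mu_G(n_iA) = \mu_G\Big(\bigsqcup_{i=1}^k n_iA\Big) \leq \mu_G(BA),
\]
which uses that $BA$ is measurable as assumed.

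For the second inequality, fix $N \in N_{max}(B,A)$. Lemma \ref{lem reunion filet} gives $B \subset NAA^{-1} = \bigcup_{n \in N} nAA^{-1}$. Since $B$ is measurable, monotonicity and finite subadditivity of the (outer) Haar measure, together with left-invariance, yield
\[
\mu_G(B) \leq \sum_{n \in N} \mu_G(nAA^{-1}) = |N|\,\mu_G(AA^{-1}).
\]
Here $\mu_G(AA^{-1})$ is to be read, if necessary, as the outer measure of $AA^{-1}$, which is harmless: outer measure is still left-invariant and finitely subadditive, and in all later applications $A$ will be compact, so $AA^{-1}$ is compact and hence genuinely measurable.

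There is essentially no obstacle in this lemma: it is a bookkeeping consequence of invariance and (sub)additivity of $\mu_G$. The only point deserving a word of care is the potential non-measurability of $AA^{-1}$ in the second estimate, which is dealt with by passing to outer measure as indicated above.
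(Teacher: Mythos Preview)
Your proof is correct and follows essentially the same route as the paper: disjointness and left-invariance for the first inequality, and Lemma \ref{lem reunion filet} plus subadditivity for the second. Your extra remark on reading $\mu_G(AA^{-1})$ as an outer measure when $AA^{-1}$ is not assumed measurable is a small refinement the paper leaves implicit.
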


\begin{proof} Let us prove the first item. From the definition of a net, $NA$ is the disjoint union of the $nA$ for $n$ in $N$, so $\mu_G(NA) = \vert N \vert \mu_G(A)$. But since $N\subset B$, $NA \subset BA$, so we get the desired inequality.
Let us now prove the second item. From Lemma \ref{lem reunion filet}, $B \subset NAA^{-1}$. So, we have $\mu_G(B) \leq \mu_G(NAA^{-1}) \leq \mu_G(AA^{-1}) \vert N \vert$.
\end{proof}

\begin{lem}\label{lem estimation cardinal des filets}

Let $G$ be a locally compact Haudorff unimodular group, $A_1,A_2,B,$ compact neighborhoods of $e$ in 
$G$. Then there are constants $c_1,c_2 >0$ such that \[\forall n \geq 3,\ \forall N_1 \in N_{max}(B^n,A_1), \forall N_2 \in N_{max}(B^{n-2},A_2),\quad c_1 \leq \fra{\vert N_2 \vert}{\vert N_1 \vert}\leq c_2.\]
\end{lem}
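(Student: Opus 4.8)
The plan is to sandwich both cardinals $\lvert N_1\rvert$ and $\lvert N_2\rvert$ between Haar-measures of powers of $B$, and then use Lemma \ref{croissance geometrique} to control the ratio of those measures. Concretely, I would apply Lemma \ref{cardinal filet mesure} to each net. For $N_1\in N_{max}(B^n,A_1)$ the two inequalities give
\[
\mu_G(B^n)\le \lvert N_1\rvert\,\mu_G(A_1A_1^{-1})\qquad\text{and}\qquad \lvert N_1\rvert\,\mu_G(A_1)\le \mu_G(B^nA_1).
\]
Likewise for $N_2\in N_{max}(B^{n-2},A_2)$,
\[
\mu_G(B^{n-2})\le \lvert N_2\rvert\,\mu_G(A_2A_2^{-1})\qquad\text{and}\qquad \lvert N_2\rvert\,\mu_G(A_2)\le \mu_G(B^{n-2}A_2).
\]
Since $A_1,A_2,B$ are compact neighborhoods of $e$, all the quantities $\mu_G(A_i)$, $\mu_G(A_iA_i^{-1})$ are finite and strictly positive, so they contribute only harmless positive constants; the whole game is comparing $\mu_G(B^n)$, $\mu_G(B^{n-2})$, $\mu_G(B^nA_1)$, $\mu_G(B^{n-2}A_2)$.

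From the displayed inequalities I get
\[
\frac{\lvert N_2\rvert}{\lvert N_1\rvert}\ \le\ \frac{\mu_G(B^{n-2}A_2)}{\mu_G(A_2)}\cdot\frac{\mu_G(A_1A_1^{-1})}{\mu_G(B^n)}
\qquad\text{and}\qquad
\frac{\lvert N_2\rvert}{\lvert N_1\rvert}\ \ge\ \frac{\mu_G(B^{n-2})}{\mu_G(A_2A_2^{-1})}\cdot\frac{\mu_G(A_1)}{\mu_G(B^nA_1)}.
\]
So it suffices to bound $\mu_G(B^{n-2}A_2)/\mu_G(B^n)$ above and $\mu_G(B^nA_1)/\mu_G(B^{n-2})$ above, both by constants independent of $n$. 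Here is where Lemma \ref{croissance geometrique} enters. Writing $m=n-2$, the denominator $\mu_G(B^n)=\mu_G(B^{m+2})=\mu_G(B^{m+k-1}C)$ with $k=3$, $C=\{e\}$ — or more cleanly, apply Lemma \ref{croissance geometrique} with the compact neighborhood $B$ playing the role of $B$, with $A:=A_2$ (compact), and $k:=3$: there is a constant $c$ with $\mu_G(B^{m+2}A_2)\le c\,\mu_G(B^m)$ for all $m\ge 1$, i.e. $\mu_G(B^{n}A_2)\le c\,\mu_G(B^{n-2})$; a fortiori $\mu_G(B^{n-2}A_2)\le \mu_G(B^nA_2)\le c\,\mu_G(B^{n-2})$, and since $\mu_G(B^{n-2})\le\mu_G(B^n)$ this bounds $\mu_G(B^{n-2}A_2)/\mu_G(B^n)$. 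Symmetrically, Lemma \ref{croissance geometrique} applied with $A:=A_1$ and the shift in the other direction gives a constant $c'$ with $\mu_G(B^nA_1)=\mu_G(B^{(n-2)+3-1}A_1)\le c'\mu_G(B^{n-2})$, valid for $n-2\ge 1$, hence for $n\ge 3$. Unimodularity is what lets me freely use Lemma \ref{croissance geometrique} (stated for a right-Haar measure) together with Lemma \ref{cardinal filet mesure} (stated for a left-Haar measure): for unimodular $G$ the two coincide, so there is no discrepancy between the two lemmas.

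Collecting the constants, set $c_1:=\dfrac{\mu_G(A_1)}{c'\,\mu_G(A_2A_2^{-1})}$ and $c_2:=\dfrac{c\,\mu_G(A_1A_1^{-1})}{\mu_G(A_2)}$; these are finite and positive, and the two chains of inequalities above give exactly $c_1\le \lvert N_2\rvert/\lvert N_1\rvert\le c_2$ for all $n\ge 3$. The hypothesis $n\ge 3$ is precisely what is needed so that the exponent $n-2$ appearing in Lemma \ref{croissance geometrique} is at least $1$. I do not expect a genuine obstacle here; the only thing to be careful about is bookkeeping the direction of the shift in Lemma \ref{croissance geometrique} (it compares $B^{n+k-1}A$ with $B^n$, so one must match up $n$, $k$ and the target power correctly in each of the two estimates) and making sure all the measures of the fixed compact neighborhoods are genuinely nonzero, which holds because each $A_i$, $B$ contains a nonempty open set.
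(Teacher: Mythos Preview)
Your proof is correct and follows essentially the same approach as the paper: sandwich $\lvert N_1\rvert$ and $\lvert N_2\rvert$ between Haar-measures via Lemma~\ref{cardinal filet mesure}, then invoke Lemma~\ref{croissance geometrique} to control the resulting ratios of ball-volumes. Your explicit remark that unimodularity is precisely what reconciles Lemma~\ref{croissance geometrique} (right Haar) with Lemma~\ref{cardinal filet mesure} (left Haar) is a point the paper leaves implicit; and your single application of Lemma~\ref{croissance geometrique} with $k=3$ to get $\mu_G(B^nA_1)\le c'\mu_G(B^{n-2})$ merges what the paper does in two steps (via $k_1$ and $k_3$), while conversely your route to bounding $\mu_G(B^{n-2}A_2)/\mu_G(B^n)$ is slightly roundabout (taking $k=1$ there would be more direct) --- but none of this is a genuine difference in strategy.
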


\begin{proof}
From Lemma \ref{croissance geometrique}, there are strictly positive $k_1,k_2,k_3$ such that
\begin{equation}\label{eq: k_1}
   \forall n\in \mathbb{N},\quad  \mu_G(B^nA_1) \leq k_1\mu_G(B^n),
\end{equation}
\begin{equation}\label{eq: k_2}
   \forall n\geq 3,\quad  \mu_G(B^{n-2}A_2) \leq k_2\mu_G(B^n),
\end{equation}
\begin{equation}\label{eq: k_3}
    \forall n\in \mathbb{N},\quad  \mu_G(B^{n+2}) \leq k_3\mu_G(B^n).
\end{equation}
Let $n \geq 3$, $N_1 \in N_{max}(B^n,A_1)$, $N_2 \in N_{max}(B^{n-2},A_2)$. We have
\begin{equation}\label{eq: majoration N-2n}
    \vert N_2 \vert \stackrel{Lemma\ \ref{cardinal filet mesure}}{\leq} \frac{\mu_G(B^{n-2}A_2)}{\mu_G(A_2)} \stackrel{\eqref{eq: k_2}}{\leq} \fra{k_2}{\mu_G(A_2)}\mu_G(B^n)
\end{equation}
Moreover, 
\begin{equation}\label{eq: minoration N-2n}
    \mu_G(B^{n-2})\frac{1}{\mu_G(A_2A^{-1}_2)} \stackrel{Lemma\ \ref{cardinal filet mesure}}{\leq} \vert N_2 \vert .
\end{equation} 
So, we have
\begin{equation*}
\fra{1}{k_3\mu_G(A_2A^{-1}_2)}\mu_G(B^n) \stackrel{\eqref{eq: k_3}}{\leq} \mu_G(B^{n-2}) \fra{1}{\mu_G(A_2A^{-1}_2)} \stackrel{\eqref{eq: minoration N-2n}}{\leq} \vert N_2 \vert \stackrel{\eqref{eq: majoration N-2n}}{\leq} \fra{k_2}{\mu_G(A_2)}\mu_G(B^n).    
\end{equation*}
On the other hand, 
\[\vert N_1 \vert \stackrel{Lemma\   \ref{cardinal filet mesure}}{\leq} \frac{\mu_G(B^nA_1)}{\mu_G(A_1)},\] 
so 
\[\vert N_1 \vert \stackrel{\eqref{eq: k_1}}{\leq} \frac{k_1}{\mu_G(A_1)}\mu_G(B^n).\] 
Moreover, \[\mu_G(B^n)\frac{1}{\mu_G(A_1A^{-1}_1)}\stackrel{Lemma\  \ref{lem estimation cardinal des filets}}{\leq} \vert N_1 \vert.\] So we have \[\mu_G(B^n)\frac{1}{\mu_G(A_1A^{-1}_1)} \leq \vert N_1 \vert \leq \frac{k_1}{\mu_G(A_1)}\mu_G(B^n).\]We therefore obtain \[\frac{\mu_G(A_1)}{k_3k_1\mu_G(A_2A^{-1}_2)} \leq \frac{\vert N_2 \vert}{\vert N_1 \vert} \leq \frac{k_2 \mu_G(A_1A^{-1}_1)}{\mu_G(A_2)}.\]
\end{proof}

\subsection{Proof of Theorem \ref{thm: orbites negligeables} and Corollary \ref{coro: actions sur reseaux}}\label{proofs of the corollaries}

In order to prove Theorem \ref{thm: orbites negligeables}, it is obviously enough to prove the following proposition: 

\begin{prop}\label{prop bons ouverts} 

Let $G$ be a locally compact Hausdorff unimodular group. Let $X$ be a Hausdorff topological space, $\nu$ an atomless regular Borel measure on $X$. Consider a continuous measure-preserving action $G \curvearrowright X$. We assume that there exists $x_0$ in the support of $\nu$ such that $\Stab_G(x_0)$ is compact and for any compact subset $K$ of $G$, $\nu(Kx_0) = 0$.
Then for each compact symmetric neighborhood $S$ of $e$ in $G$, there exists a compact, symmetric neighborhood $F$ of $e$, and a sequence of neighborhoods of $x_0$ that is of $(S,F)$-moderate growth. 
\end{prop}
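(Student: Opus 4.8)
The plan is to build, for each compact symmetric neighbourhood $S$ of $e$, a sequence of neighbourhoods $B_n$ of $x_0$ inside a fixed ``ball'' $B^n$ for a suitable compact symmetric neighbourhood $B$ of $e$, and to verify conditions (1)--(3) of Definition \ref{Moderate growth} one at a time. First I would fix $B$ to be a compact symmetric neighbourhood of $e$ with $S\subset B$ (so that $S^n\subset B^n$, which will make condition (2) easier to control), and observe that since $\Stab_G(x_0)=G_{x_0}$ is compact, the orbit map $g\mapsto gx_0$ is proper on cosets of $G_{x_0}$; thus $B^n x_0$ is compact, hence by hypothesis $\nu(B^{n+2}x_0)=0$ for every $n$. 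The natural candidate for $B_n$ is a small open neighbourhood of the compact set $B^{n}x_0$: by outer regularity of $\nu$ and $\nu(B^{n}x_0)=0$ we may choose an open $B_n\supset B^{n}x_0$ with $\nu(B_n)$ as small as we like, in particular positive (condition (1) is then immediate since $B_n$ is a nonempty open set meeting $\supp\nu$ at $x_0$) and small enough that condition (2) holds — indeed $S^nB_n$ is contained in $B^{n}\cdot B_n$; choosing $B_n$ inside, say, $B^{n}V$ for a fixed compact neighbourhood $V$ of $e$, we get $S^nB_n\subset B^{2n}V\cdot x_0$-neighbourhood, which is covered by a fixed open set of measure $<1/2$ once $\nu(B_n)$ is small, again using $\nu(B^{m}x_0)=0$ together with outer regularity. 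So (1) and (2) are routine consequences of $\nu(Kx_0)=0$ and Theorem \ref{Sierpinski}-type flexibility in an atomless space.

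The heart of the matter is condition (3): we must arrange $\inf_{g\in F}\nu(gB_n\cap B_n)/\nu(B_n)$ to not decay faster than geometrically, i.e. its $n$-th root should tend to $1$. This is exactly where the net/volume estimates of the previous subsection enter. The idea is to make $B_n$ not an arbitrary small neighbourhood of $B^{n}x_0$, but a \emph{translate-stable} thickening built from a maximal net. Concretely, let $F$ be a small compact symmetric neighbourhood of $e$, choose a maximal $A$-separated net $N_1=N_1(n)$ in $B^{n}$ (for some fixed compact neighbourhood $A$ of $e$ with $FA A^{-1}\subset B$, so that translating a net point by $F$ keeps it inside the ``next'' net scale), and set $B_n$ to be a neighbourhood of $N_1\cdot x_0$ of the form $\bigcup_{s\in N_1} W_s$ where each $W_s$ is a small neighbourhood of $sx_0$. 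Because the orbit $Gx_0$ is $G_{x_0}$-proper and $\nu$ is atomless and outer regular, we can choose the pieces $W_s$ of roughly equal, controllably small $\nu$-measure, so that $\nu(B_n)\asymp |N_1|\cdot\varepsilon_n$ up to the bounded-overlap constants coming from Lemma \ref{cardinal filet mesure}. The key point is that for $g\in F$, $gB_n\cap B_n$ contains $\bigcup_{s\in N_1'} gW_s\cap W_{s}$ for the net points $s$ that are ``interior'' (at $B$-distance at least $2$ from the boundary of $B^{n}$), i.e. for $s$ ranging over a maximal net $N_2$ of $B^{n-2}$; by Lemma \ref{lem estimation cardinal des filets}, $|N_2|/|N_1|$ is bounded below and above by constants $c_1,c_2$ independent of $n$. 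Hence $\nu(gB_n\cap B_n)\geq c\,\nu(B_n)$ for a constant $c>0$ not depending on $n$ (here I also use continuity of the action and compactness of $F$ to guarantee that $gW_s\cap W_s$ captures a fixed proportion of $\nu(W_s)$, shrinking the $W_s$ if necessary using strong continuity of $\pi_0$ and the atomless hypothesis). Therefore $\inf_{g\in F}\nu(gB_n\cap B_n)/\nu(B_n)\geq c$, and $c^{1/n}\to 1$, giving (3) — in fact with the $\limsup$ equal to $1$.

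The main obstacle, and the step that needs the most care, is the simultaneous control of conditions (2) and (3): condition (2) forces $\nu(B_n)$ to be small (so the thickening $W_s$ must shrink with $n$), while condition (3) needs the thickening to be ``fat enough in the $F$-direction'' that an $F$-translate still overlaps a fixed proportion of it. The resolution is that these act on different scales — smallness of $\nu(B_n)$ is achieved by exploiting $\nu(Kx_0)=0$ (the orbit is $\nu$-null, so \emph{any} neighbourhood of a compact orbit-piece can be taken of arbitrarily small measure), whereas the $F$-overlap proportion is a \emph{ratio} that is scale-invariant provided each $W_s$ is chosen inside a genuine fundamental-domain-like neighbourhood transverse to the orbit, on which the local action of $F$ is ``almost a translation''; properness of the orbit map modulo the compact stabiliser and continuity of the action make this precise and uniform in $s$ and in $n$. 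Unimodularity is used exactly once, to invoke Lemma \ref{lem estimation cardinal des filets} so that $|N_1|$ and $|N_2|$ are comparable (a left/right Haar discrepancy would destroy the uniform constant $c_1$). Once (1), (2), (3) are checked, Proposition \ref{mainthm discrepance localement compact} applies verbatim and yields $\|\pi_0(\mu)\|\geq\|\lambda_G(\mu)\|$, proving Theorem \ref{thm: orbites negligeables}.
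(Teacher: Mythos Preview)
Your overall architecture matches the paper's --- build $B_n$ as a thickened orbit-segment around $x_0$ and control condition (3) via the net ratio $|N_2|/|N_1|$ from Lemma \ref{lem estimation cardinal des filets} --- but the mechanism you propose for the $F$-overlap has a genuine gap. You claim that for $g\in F$ each piece $gW_s\cap W_s$ captures a fixed proportion of $\nu(W_s)$, invoking continuity of the action and ``shrinking the $W_s$ if necessary''. This cannot work: condition (2) forces the transverse neighbourhoods to shrink as $n\to\infty$, while $F$ is fixed once and for all. For any $g\in F$ with $gsx_0\neq sx_0$, once $W_s$ is small enough the translate $gW_s$ (a neighbourhood of $gsx_0$) is disjoint from $W_s$ in the Hausdorff space $X$. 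Strong continuity of $\pi_0$ is irrelevant here: it controls $\|\pi_0(g)\phi-\phi\|$ as $g\to e$ for \emph{fixed} $\phi$, not as the support of $\phi$ shrinks for fixed $g$. Your last paragraph acknowledges the tension between (2) and (3), but the proposed resolution (``scale-invariance'' of the overlap ratio on a transverse slice) does not survive this objection: on a transverse slice the elements of $F$ genuinely move points along the orbit, they do not fix them.

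The paper obtains the $F$-overlap \emph{longitudinally} rather than transversally. Taking $F=S$ and $B_n=F^nV_n$ for a single small neighbourhood $V_n$ of $x_0$, one has $g^{-1}F^{n-1}\subset F^n$ for every $g\in F$, hence $F^{n-1}V_n\subset gB_n\cap B_n$ \emph{regardless of how small $V_n$ is}; the overlap comes from the long tube absorbing a unit shift, not from any individual piece overlapping its own translate. The net ratio then enters to compare $\nu(F^{n-1}V_n)$ with $\nu(F^nV_n)$, but transferring the net count from $(G,\mu_G)$ to $(X,\nu)$ requires the translates $\{hFV_n:h\in N_1\}$ to be pairwise disjoint \emph{in $X$} --- a second ingredient missing from your plan (your appeal to Lemma \ref{cardinal filet mesure} for ``bounded-overlap constants'' lives in $G$, not in $X$). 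The paper secures this via a non-return step (Lemma \ref{prop retours lointains}): shrink $V_n$ further so that $gV_n\cap V_n=\emptyset$ for all $g\in F^{2n-2}\setminus\Stab_G(x_0)F$. This forces the net $N_1$ to be taken $F\Stab_G(x_0)F^2$-separated --- the stabilizer enters the separation scale, and its compactness is used precisely so that $F\Stab_G(x_0)F^2$ is a compact neighbourhood of $e$ and Lemma \ref{lem estimation cardinal des filets} applies --- after which distinct net points give disjoint pieces in $X$.
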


\begin{rema} If $G$ is $\sigma$-compact, then $\nu(Kx) = 0$ for every compact $K$ if and only if the orbit of $x_0$ is a null-set.
\end{rema}

We will need the following lemma.

\begin{lem} \label{prop retours lointains}
Let $G$ be a topological group, $X$ a Hausdorff topological space. Let $G \curvearrowright X$ be a continuous action (that is, the map $G\times X \rightarrow X$ that defines it is continuous).

Let $x_0 \in X$, let $K$ be a compact subset of $G$, and $U$ a neighborhood of $e$ in $G$. Then there exists a neighborhood $V$ of $x_0$ in $X$ such that \[\forall g \in K\setminus \Stab_G(x_0)U,\quad gV \cap V = \emptyset.\]
\end{lem}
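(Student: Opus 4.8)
\textbf{Plan of proof of Lemma \ref{prop retours lointains}.}
The statement asserts that points of $x_0$ have a neighborhood that is ``pushed off itself'' by every group element in $K$ that is not close to the stabilizer, where ``close to the stabilizer'' means lying in $\Stab_G(x_0)U$. The natural strategy is a compactness argument in $G$: the set $K \setminus \Stab_G(x_0)U$ is contained in the compact set $K$, but it need not be closed, so I would first replace it by a compact set. Observe that $\Stab_G(x_0)U$ is open (it is a union of translates $sU$ of the open set $U$), hence $K' := K \setminus \Stab_G(x_0)U$ is a closed subset of the compact set $K$, therefore compact.

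For each $g \in K'$ we have $gx_0 \neq x_0$: indeed $g \notin \Stab_G(x_0)$ since $e \in U$ forces $\Stab_G(x_0) \subset \Stab_G(x_0)U$, so $g \notin \Stab_G(x_0)U$ implies $g \notin \Stab_G(x_0)$. Since $X$ is Hausdorff, choose disjoint open sets separating $gx_0$ and $x_0$; using continuity of the action at the point $(g,x_0)$, I can find an open neighborhood $W_g$ of $g$ in $G$ and an open neighborhood $V_g$ of $x_0$ in $X$ such that $W_g V_g$ and $V_g$ are disjoint, i.e. $h v \notin V_g$ for all $h \in W_g$, $v \in V_g$; shrinking $V_g$ if necessary we may also arrange $W_g V_g \cap V_g = \emptyset$ directly (pick disjoint opens $O_1 \ni gx_0$, $O_2 \ni x_0$, then by continuity a neighborhood $W_g \times V_g'$ of $(g,x_0)$ mapping into $O_1$, and set $V_g = V_g' \cap O_2$, so $W_g V_g \subset O_1$ is disjoint from $O_2 \supset V_g$).

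Now the family $\{W_g\}_{g \in K'}$ is an open cover of the compact set $K'$; extract a finite subcover $W_{g_1},\dots,W_{g_m}$ and set $V := \bigcap_{i=1}^m V_{g_i}$, a neighborhood of $x_0$ in $X$. For any $g \in K \setminus \Stab_G(x_0)U = K'$ we have $g \in W_{g_i}$ for some $i$, and then $gV \subset g V_{g_i} \subset W_{g_i} V_{g_i}$, which is disjoint from $V_{g_i} \supset V$; hence $gV \cap V = \emptyset$, as required.

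The only mildly delicate point is the very first one — recognizing that $\Stab_G(x_0)U$ is open so that $K'$ is compact — and the bookkeeping in the separation step to guarantee $W_g V_g \cap V_g = \emptyset$ rather than merely $gV_g \cap V_g = \emptyset$; both are routine once set up correctly, and there is no substantive obstacle. Note that compactness of $\Stab_G(x_0)$ is not needed for this lemma; it will be used elsewhere in the proof of Proposition \ref{prop bons ouverts}.
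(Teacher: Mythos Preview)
Your proof is correct and follows essentially the same approach as the paper's own argument: establish compactness of $K' = K \setminus \Stab_G(x_0)U$, use Hausdorffness of $X$ together with continuity of the action to produce, for each $g \in K'$, a neighborhood $W_g$ of $g$ and a neighborhood $V_g$ of $x_0$ with $W_g V_g \cap V_g = \emptyset$, then extract a finite subcover and intersect the $V_{g_i}$. Your separation step (disjoint opens $O_1, O_2$ and $V_g := V_g' \cap O_2$) is a slight variant of the paper's (which takes a closed neighborhood $Y$ of $x_0$ missing $gx_0$), but the content is identical.
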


\begin{proof} First of all, let us notice that $\Stab_G(x_0)U$ is open, and therefore $K\setminus\Stab_G(x_0)U$ is compact. 

Let $E$ be the set of triples $(g,W,V)$ such that \begin{itemize}
\item $g \in K \setminus \Stab_G(x_0)U$;
\item $W$ is a neighborhood of $g$ in $G$;
\item $V$ is a neighborhood of $x_0$ in $X$;
\item $\forall w\in W,\quad wV \cap V = \emptyset$.
\end{itemize}

For all $g \in K \setminus \Stab_G(x_0)U$, there exists $W,V$ such that $(g,W,V) \in E$. Indeed, $g \not \in \Stab_G(x_0)$, so $gx_0 \neq x_0$. Let $Y$ be a closed neighborhood of $x_0$ that does not contain $gx_0$. By continuity, the set of  $(h,x)$ such that $hx \not \in Y$ is an open subset of  $G\times X$; moreover, it contains $(g,x_0)$, so there are neighborhoods $W,Z$ of $g$ and $x_0$ such that $WZ \subset Y^c$. Let $V := Z\cap Y$. Then $WV \subset Y^c \subset V^c$, so $WV \cap V = \emptyset$. 

Therefore, the set of second coordinates of $E$ covers $K \setminus \Stab_G(x_0)U$. By compacity, there exists $n \in \nn^*$, $g_1,\cdots,g_n$, $W_1,\cdots,W_n$, $V_1,\cdots,V_n$ such that $K \setminus \Stab_G(x_0)U \subset \bigcup_i W_i$. Let us define $V := \bigcap_i V_i$.  This subset $V$ satisfies the needed requirements: it is a neighborhood of $x_0$, and if $g \in K \setminus \Stab_G(x_0)U$, then there exists $i$ such that $g \in W_i$. Therefore $gV_i \cap V_i = \emptyset$, so $gV\cap V =\emptyset$.
\end{proof}

\begin{proof}[Proof of Proposition \ref{prop bons ouverts}] Let $S$ be a compact, symmetric neighborhood of $G$. We would like to find a compact symmetric neighborhood $F$ of $e$ and a sequence $(B_n)_{n \geq 3}$ that has $(S,F)$-moderate growth. Let $n \geq 3$. 

First of all, since $\nu(S^{2n}x_0) = 0$, and since $\nu$ is regular, there exists an open subset $U$ of $X$ containing $S^{2n}x_0$, that has measure strictly smaller than $\frac{1}{2}$. 

We claim that there exists a neighborhood $W_n$ of $x_0$ such that $S^{2n}W_n \subset U$. Indeed, by continuity of the action, for all $g \in S^{2n}$, there exists a neighborhood $D_g$ of $g$ and a neighborhood $E_g$ of $x_0$ such that $D_gE_g \subset U$. By compactness of $S^{2n}$, there exists a finite subset $I$ of $S^{2n}$ such that $S^{2n} \subset \bigcup_{i \in I} D_i$. Let us now define $W_n := \bigcap_{i \in I} E_i$. We have \[S^{2n} W_n \subset \bigcup_{i \in I} D_iW_n \subset \bigcup_{i \in I} D_iE_i \subset U,\]which gives the claim.

Now, taking $F = S$, according to Lemma \ref{prop retours lointains}, there exists a neighborhood $V_n$ of $x_0$ such that 

\begin{equation}\label{eq: xx}
\forall g \in S^{2n-2} \setminus \Stab_G(x_0)S, \quad gV_n \cap V_n = \emptyset;
\end{equation}

moreover, up to taking a smaller neighborhood instead of $V_n$, we can assume that $V_n$ is contained in $W_n$.

Let us then define \[B_n := S^nV_n.\]We now check that the sequence $(B_n)_{n \geq }$ fulfills the three requirements of $(S,F)$-moderate growth.

First of all, since $V_n$ is a neighborhood of $x_0$ and since $x_0$ is in the support of $\nu$, $\nu(B_n) > 0$, so the first condition is verified. Moreover, by construction, $S^nB_n = S^{2n}V_n \subset U$; and $\nu(U) < \frac{1}{2}$ so the second condition is satisfied.

We will now estimate, for all $g \in F$, \[\fra{\nu(gB_n \cap B_n)}{\nu(B_n)}\]by considering nets.

Let $N_{1} \in N_{max}(S^{n-2},S\Stab_G(x_0)S^2 )$. We have $N_{1}F \subset F^{n-1}$, so 
\begin{equation}\label{eq: xxx}
\nu(N_{1}FV_n) \leq \nu(F^{n-1}V_n).
\end{equation}
Moreover, we have \[N_{1}FV_n = \bigsqcup_{g \in N_{1}} gFV_n;\] indeed, on the one hand, let $g_1,g_2 \in N_{1},\ f_1,f_2 \in F,\ v_1,v_2 \in V_n$ such that $g_1f_1v_1 = g_2f_2v_2$. We then have $f^{-1}_2g^{-1}_2g_1f_1v_1 = v_2$. On the other hand, $f^{-1}_2g^{-1}_2g_1f_1 \in F^{2n-2}$, so, by condition \eqref{eq: xx} on $V_n$, we have $f^{-1}_2g^{-1}_2g_1f_1 \in \Stab_G(x_0)F$. So, $g_1 \in g_2F\Stab_G(x_0)F^2$, and this is only possible if $g_1 = g_2$.

We deduce from this \[\nu(N_{1}FV_n) = \vert N_{1} \vert \nu(FV_n),\]so \[\vert N_{1} \vert \nu(FV_n) \stackrel{\eqref{eq: xxx}}{\leq} \nu(F^{n-1}V_n).\]

Now, for any $g \in F$, we have $F^{n-1}V_n \subset gF^nV_n \cap F^nV_n = gB_n \cap B_n$, so \[\nu(gB_n \cap B_n) \geq \vert N_{1} \vert \nu(FV_n).\]

Finally, let $C$ be a compact neighborhood of $e$ in $G$ such that $CC^{-1} \subset F$. Let $N_2 \in N_{max}(F^n,C)$. Then \[B_n = F^nV_n \stackrel{Lemma\ \ref{lem reunion filet}}{\subset} N_2CC^{-1}V_n\] so \[\nu(B_n) \leq \vert N_2 \vert \nu(FV_n).\]We therefore have, for all $n \geq 3$, \[\fra{\nu(gB_n \cap B_n)}{\nu(B_n)} \geq \fra{\vert N_{1} \vert}{\vert N_2\vert},\]so \[\left(\fra{\nu(gB_n \cap B_n)}{\nu(B_n)}\right)^{\frac{1}{n}} \geq \left(\fra{\vert N_{1} \vert}{\vert N_2 \vert}\right)^{\frac{1}{n}}.\]The third requirement follows now from Lemma \ref{lem estimation cardinal des filets}.
\end{proof}

We now add a few words to show that Corollary \ref{coro: actions sur reseaux} follows in a straightforward way from Theorem \ref{thm: orbites negligeables}.

\begin{proof}[Proof of Corollary \ref{coro: actions sur reseaux}] It is immediate to check the hypotheses of Theorem \ref{thm: orbites negligeables} are satisfied: let $G$ be a locally compact group, $H$ a closed subgroup of $G$ such that $\mu_G(H) = 0$, and $\Gamma$ a lattice in $G$ such that $H\cap \Gamma = \{e\}$.
The action of $H$ on $G/\Gamma$ by left-translations is continuous, its orbits are null-sets, and the stabilizer  $H \cap \Gamma$ of the coset $e\Gamma$ is finite, hence compact.
\end{proof}

\section{Examples and counterexamples}

\subsection{Margulis' counterexample}
\label{contreex}

This section, provides many examples showing that Theorem \cite[Thm 3.6]{PinPitExactRate}  cannot extend to the case where $G$ is locally compact without making further hypotheses.

\begin{exe} Consider the action by left-translation of $\mathbb{R}$ on $\mathbb{R}/\mathbb{Z}$. Let, for $t \in \rr$, $\pi_0(t)$ be the operator that sends every zero-integral $\phi \in L^2(\rr/\zz,Leb)$ to the map $s \mapsto \phi(s-t)$. Let $\mu$ be the uniform probability measure on $[0,1]$. Then, obviously, $\pi_0(\mu)$ is the zero operator, whereas $\lambda_\rr(\mu)$ is not.
\end{exe}

In fact, we have the following generalization, that follows from the discussion that one can find in pages $107-112$ of the book \cite{MARGU}. We give a sketch of the proof for the convenience of the reader.

\begin{prop}[Margulis]

Let $G$ be a locally compact Hausdorff $\sigma$-compact group, and let $H$ be a closed cocompact subgroup of $G$. We consider the action by left-multiplication of $G$ on $G/H$ and we suppose that there is a $G$-invariant probability measure $\nu$ on $G/H$. Let $\pi$ denote the associated Koopman representation, and $\pi_0$ the sub-representation on the subspace of zero integral. Then there is a continuous, compactly-supported, nonnegative function $f$ on $G$ that has integral $1$, such that \[\Vert \pi_0(f) \Vert < 1.\]
\end{prop}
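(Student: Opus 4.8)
The plan is to produce a test function $f$ whose associated averaging operator $\pi_0(f)$ has norm strictly less than $1$ by exploiting the cocompactness of $H$ and the amenability-like structure of $G/H$ as a \emph{compact} space. The key point is that $G/H$ being compact makes the Koopman representation $\pi_0$ behave, on the relevant cone of positive functions, like a representation with a spectral gap in a manner one can engineer by hand. First I would fix a compact symmetric neighbourhood $S$ of $e$ in $G$ and note that, since $G/H$ is compact and $\nu$ is $G$-invariant, the measure $\nu$ has full support on $G/H$ and the orbit map $g \mapsto g x_0$ is open onto a neighbourhood of $x_0$ for any $x_0 \in G/H$. In fact, because $H$ is cocompact, finitely many translates $g_1 x_0, \dots, g_k x_0$ cover $G/H$; this is the structural input that replaces the "negligible orbit" hypothesis of Theorem \ref{thm: orbites negligeables} with its exact negation, and is the reason the conclusion flips from a lower bound of $1$ to a strict upper bound below $1$.

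Next I would take $f$ to be (a smoothed version of) the normalized indicator of $S$, i.e. $f = \mu_G(S)^{-1}\mathbf{1}_S$ convolved with itself or mollified so as to be continuous, compactly supported, nonnegative, of integral $1$. The operator $\pi_0(f)$ is then a positive-type averaging operator, self-adjoint if $S$ is symmetric, with $\|\pi_0(f)\|\le 1$. The claim $\|\pi_0(f)\|<1$ is equivalent to the assertion that $1$ is not in the approximate point spectrum of $\pi(f)$ on $L^2_0(X,\nu)$, equivalently that $\pi(f)$ has no sequence of almost-invariant unit vectors orthogonal to the constants. To rule this out, I would argue as follows: any such almost-invariant vector $\phi_m$ would have to be almost invariant under translation by all of $S$, hence (by iterating and using that $S$ generates a cocompact-in-$G$ subgroup, or directly that $S^N x_0$ covers $G/H$ for some fixed $N$, using cocompactness) almost invariant under a neighbourhood of the identity large enough to move any point of $G/H$ to any other. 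By a standard ergodicity/connectedness argument — the $G$-action on $(G/H,\nu)$ is ergodic because $\nu$ is the unique invariant probability measure on the transitive $G$-space $G/H$ — a genuinely invariant vector must be constant, and a quantitative version of this (a Poincaré-type inequality on the compact space $G/H$ obtained from the covering $G/H = \bigcup_{i=1}^k g_i V$ by small balls) yields $\|\pi_0(f)^{N}\|\le 1-\epsilon$ for an explicit $\epsilon>0$ depending on $k$ and the covering, hence $\|\pi_0(f)\|\le(1-\epsilon)^{1/N}<1$.

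Alternatively, and perhaps more cleanly following the cited pages of \cite{MARGU}, I would invoke that a $G$-invariant probability measure on the cocompact homogeneous space $G/H$ forces $\lambda_{G/H}=\lambda_G^{G/H}$ (the quasi-regular representation) to contain the trivial representation as an \emph{isolated} point in a suitable sense once one restricts to positive convolution operators supported on a fixed compact set: concretely, $\pi_0$ is weakly contained in the representation $\mathrm{Ind}_H^G \mathbf{1} \ominus \mathbf{1}$, and on this representation the operator $f$ acts with norm governed by the spectral radius of a finite stochastic-type matrix built from the covering data of $G/H$, which is $<1$ precisely because the associated finite graph (vertices $g_i x_0$, edges recording $S$-overlaps) is connected. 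I expect the \textbf{main obstacle} to be making the passage "almost-invariant under $S$ implies almost-invariant under a large neighbourhood" quantitative with a uniform constant — i.e. proving the Poincaré inequality on $G/H$ with a rate independent of $m$ — since naively iterating loses a factor at each step; the fix is to use the \emph{fixed} covering number $k$ coming from compactness of $G/H$ so that only boundedly many iterations are needed, after which the estimate closes. The continuity and compact support of $f$ are then arranged at the end by a routine mollification that changes $\|\pi_0(f)\|$ by an arbitrarily small amount.
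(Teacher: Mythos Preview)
Your overall instinct --- compactness of $G/H$ together with ergodicity should rule out almost-invariant vectors in $L^2_0$ --- is correct, but the argument has a real gap, and it is not quite the obstacle you name. Passing from ``$\phi_m$ almost-invariant under $S$'' to ``$\phi_m$ almost-invariant under $S^N$'' is routine (triangle inequality, at the cost of a factor $N$, and $N$ is fixed once $S$ is chosen to contain a compact $K$ with $KH=G$). The genuine problem is the next step, your ``Poincar\'e-type inequality'': the claim that a unit vector in $L^2_0(G/H)$ which is $\epsilon$-almost-invariant under a set acting transitively on $G/H$ must be quantitatively close to a constant. On a bare compact homogeneous space with no smooth structure this is not an available tool; in its integrated form it is \emph{equivalent} to the spectral gap $\|\pi_0(f)\|<1$ you are trying to prove, so invoking it is circular. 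A finite cover $G/H=\bigcup_i g_iV$ does not help, because $L^2$-almost-invariance is an integral condition and gives no pointwise control letting you compare values of $\phi$ across the pieces. (Your alternative route is likewise circular: $\pi_0$ \emph{is} $\mathrm{Ind}_H^G\mathbf 1\ominus\mathbf 1$, so weak containment in it is tautological, and no ``finite stochastic matrix'' has actually been produced.)

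The paper avoids any quantitative estimate by a soft compactness argument, by contradiction. Assuming $\|\pi_0(f)\|=1$ for every admissible $f$, a proposition from Margulis' book supplies an asymptotically $\pi_0$-invariant sequence $(p_i)$ of unit vectors in $L^2_0$. The decisive move absent from your sketch is a \emph{smoothing}: replace $p_i$ by $\pi_0(f)p_i/\|\pi_0(f)p_i\|$, which is still asymptotically invariant but now, by a further lemma of Margulis, lifts to a uniformly bounded equicontinuous sequence of continuous functions on $G$, hence on the compact space $G/H$. Arzel\`a--Ascoli then yields a uniform limit $p\in C(G/H)$ with zero integral, $L^2$-norm $1$, and $G$-invariance --- hence constant, hence zero: contradiction. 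Compactness in $C(G/H)$ does the work your Poincar\'e inequality was meant to do, and no explicit $\epsilon$ ever appears.
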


\begin{proof}[Proof sketch]

Assume, by contradiction, that for every continuous, compactly-supported, nonnegative function $f$ on $G$ that has integral $1$, we have \[\Vert \pi_0(f) \Vert = 1.\]

From \cite[(1.3) Proposition, p. 109]{MARGU}, there exists a sequence $(p_i)_{i \in \nn}$ that is asymptotically $\pi_0$-invariant, that is, each $p_i$ is an element of $L^2_0(G/H)$, has norm $1$, and we have, for every compact $K$ in $G$, \[\lim_{i \to \infty} \sup_{g \in K} \Vert \pi_0(g) p_i - p_i \Vert_2 = 0.\]

Let $(p_i)_{i \in \nn}$ be such a sequence. First of all, we have\[\begin{array}{rcl}
\displaystyle\limsup_{i \to \infty} \Vert \pi_0(f) p_i - p_i \Vert_2 &\leq &\displaystyle\limsup_{i \to \infty} \sup_{g \in \supp(f)} \Vert \pi_0(g) p_i - p_i \Vert_2\\
&= &0\\
\end{array}\]Let $K$ be a compact subset of $G$. We then have \[\begin{array}{rcl}
\displaystyle\limsup_{i \to \infty}\  \sup_{g \in K} \Vert \pi_0(g) \pi_0(f) p_i - \pi_0(f) p_i \Vert_2 &\leq &\displaystyle \limsup_{i \to \infty} \ \sup_{g \in K} \left( \Vert \pi_0(g)\pi_0(f) p_i - \pi_0(g) p_i \Vert_2\right.\\
&&\quad \displaystyle + \left. \Vert \pi_0(g) p_i - p_i\Vert_2 + \Vert p_i - \pi_0(f)p_i\Vert_2\right)\\
&= &\displaystyle \limsup_{i \to \infty} \ \sup_{g \in K} \left( 2\Vert \pi_0(f) p_i - p_i \Vert_2\right.\\
&&\quad + \left.\Vert \pi_0(g) p_i - p_i\Vert_2\right)\\
&= &0\\
\end{array}\]so the sequence\[\left(\fra{\pi_0(f)p_i}{\Vert \pi_0(f)p_i \Vert_2}\right)_{i \in \nn}\]is also asymptotically $\pi_0$-invariant. Let us denote, for all $i \in \nn$, $q_i$ the composition of \[\fra{\pi_0(f)p_i}{\Vert \pi_0(f)p_i\Vert_2}\] with the canonical surjection $G \rightarrow G/H$. So, from \cite[(1.7) Lemma, p. 110]{MARGU}, the sequence $(q_i)_{i \in \nn}$ is equicontinuous and uniformly bounded on $G$. From Ascoli's theorem, up to extraction, one can assume that it converges uniformly on compact subsets to a continuous, $H$-invariant on the right function (since each term of the sequence is $H$-invariant on the right). This limit gives, on the quotient $G/H$, a function that we denote by $p$. Since $G/H$ is compact, we have the uniform convergence \[\fra{\pi_0(f) p_i}{\Vert \pi_0(f)p_i \Vert_2} \rightarrow p.\]We deduce from all this \begin{itemize}[label=\textbullet]
\item that $p$ has zero integral (since it is the limit of a sequence of zero-integral functions);
\item that $p$ has $L^2$-norm $1$ (as a uniform limit of uniformly bounded functions of $L^2$-norm $1$);
\item that $p$ is $\pi_0$-invariant (as a uniform limit of $\pi_0$-invariant functions) and therefore is constant.
\end{itemize}

So, $p$ is identically $0$, but has $L^2$-norm $1$. This is a contradiction.
\end{proof}

\begin{rema}
In particular, if $G$ is a locally compact, $\sigma$-compact amenable group, if $H$ is a cocompact closed subgroup of $G$ such that $G \curvearrowright G/H$ has an invariant probability measure, and if $\pi_0$ denotes the sub-representation of the Koopman representation on the subspace of zero-integral functions, then there is a continuous, positive, symmetric function $h$ of integral $1$ 
such that 
\[ \Vert \pi_0(h)\Vert<\Vert \lambda_G(h) \Vert \]
holds (that is, Inequality (1) doesn't hold).
To prove it, we first recall the following notation from \cite[p. 282]{DIXMIER}: we  denote by $\Delta$ the modular function of $G$, and for all continuous, compactly-supported complex-valued $f$ on $G$, we denote by $f^*$ the map defined by \[\forall g \in G,\quad f^*(g) = \Delta(g^{-1}) \overline{f(g^{-1})}.\]This is the involution of the involutive algebra $C_c(G)$. Now, take $f$ as in Margulis' theorem, and let \[h := \frac{1}{\int_G f^* * f\diff \mu_G}f^* * f.\] Then $h$ satisfies the aforementioned properties. Moreover, from Kesten's theorem, since $G$ is amenable, the operator $\lambda(h)$ has norm $1$; and, according to Margulis' theorem, $\pi_0(h)$ has norm strictly less than $1$.
\end{rema}

\subsection{An action on a finite-volume homogeneous space with optimal discrepancy}
\label{sect: exemple de Zimmer}

In this section, we give an example of an action of a non-amenable locally compact group that acts continuously on a measured topological space by preserving the measure, where the discrepancy inequality is an equality.

Let $G := \sln_3(\mathbb{R}) \times \sln_3(\mathbb{R})$, and consider the subgroup \[\left\{\begin{pmatrix}
A &v\\
0 &1\\
\end{pmatrix} \ \vert \ A \in \sln_2(\mathbb{R}),\ v \in \mathbb{R}^2\right\} \simeq \mathbb{R}^2 \rtimes \sln_2(\mathbb{R})\] of $\sln_3(\mathbb{R})$ and consider its embedding $H$ in the first factor of $G$.

Consider now the unique (unital) ring morphism $\sigma : \mathbb{Z}[\sqrt{2}] \rightarrow \mathbb{C}$ sending $\sqrt{2}$ to $-\sqrt{2}$. For a matrix $A := (a_{ij})_{i,j =1,2,3} \in \sln_3(\mathbb{Z}[\sqrt{2}])$, let us denote also by $\sigma(A)$ the matrix $(\sigma(a_{ij}))_{i,j =1,2,3}$. Since $\sigma$ is a ring morphism, \[\sigma(A) \in \sln_3(\mathbb{Z}[\sqrt{2}]).\]

Consider now the map \[\begin{array}{rcl}
j : \sln_3(\mathbb{Z}[\sqrt{2}]) &\rightarrow &G\\
A &\mapsto &(A,\sigma(A)).\\
\end{array}\]

Then $j$ is an injective group morphism, and $\Gamma := j(\sln_3(\mathbb{Z}[\sqrt{2}]))$ is an irreducible lattice in $G$ (see \cite[Example 2.2.5, p.18]{ZIMMER}). Obviously, as $H$ is contained in the first factor of $G$, it follows that $\Gamma \cap H = \{e\}$.

\begin{prop}\label{exemple optimal}

Consider the action $H \curvearrowright (G/\Gamma,\mu_{G/\Gamma})$ and the sub-representation $\pi_0$ of the Koopman representation of $H$ on $L^2_0(G/\Gamma,\mu_{G/\Gamma})$. 

Then, for every positive finite Borel regular measure $\mu$ on $H$, the inequality in Corollary \ref{coro: actions sur reseaux} is optimal, that is, we have \[\Vert \lambda_H(\mu) \Vert = \|\pi_0(\mu)\|.\]
\end{prop}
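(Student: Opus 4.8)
The plan is to establish the reverse inequality $\|\pi_0(\mu)\| \le \|\lambda_H(\mu)\|$, since the inequality $\|\pi_0(\mu)\| \ge \|\lambda_H(\mu)\|$ is already furnished by Corollary \ref{coro: actions sur reseaux} (whose hypotheses are verified exactly as in its proof: $H$ is a closed subgroup of the Lie group $G$ of strictly smaller dimension, hence a null set, and $H \cap \Gamma = \{e\}$). The key point is that $\Gamma$ is an \emph{irreducible} lattice in $G = \sln_3(\mathbb{R}) \times \sln_3(\mathbb{R})$, a higher-rank semisimple Lie group, so Kazhdan's property (T) combined with the structure of the Koopman representation on $L^2_0(G/\Gamma)$ should pin down the relevant spectral data.

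First I would invoke the classical result of Kazhdan--Zimmer on the decay of matrix coefficients for actions on $G/\Gamma$: because $\Gamma$ is irreducible and $G$ has higher rank, the Koopman representation $\rho_0$ of $G$ on $L^2_0(G/\Gamma, \mu_{G/\Gamma})$ is \emph{tempered}, i.e. weakly contained in the regular representation $\lambda_G$ (this is where Zimmer's analysis, and the Cowling--Howe--Moore type decay estimates, enter — the reference \cite{ZIMMER} is already cited in the excerpt for the construction of $\Gamma$). Restricting weak containment to the closed subgroup $H$, one gets $\rho_0|_H \prec \lambda_G|_H$. Then I would use the standard fact that the restriction $\lambda_G|_H$ of the regular representation of $G$ to a closed subgroup $H$ is weakly contained in (indeed quasi-equivalent to, by the subgroup theorem for induced representations / Mackey theory, since $\lambda_G \cong \mathrm{Ind}_H^G(\lambda_H)$ and restriction of an induced representation is a direct integral of translates) the regular representation $\lambda_H$ of $H$. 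Chaining these, $\pi_0 = \rho_0|_H \prec \lambda_H$.

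Once $\pi_0 \prec \lambda_H$ is established, the conclusion is immediate from the characterization of weak containment in terms of operator norms of convolution operators: for every positive finite regular Borel measure $\mu$ on $H$ one has $\|\pi_0(\mu)\| \le \|\lambda_H(\mu)\|$ (this is the inequality of Shalom \cite[Lemma 2.3]{ShaAnnals} invoked in the introduction, valid precisely because $\mu$ is positive, applied via the weak-containment criterion of \cite[Remark 8.B.6 (2)]{BekDelaHa}). Combining with Corollary \ref{coro: actions sur reseaux} gives the equality $\|\pi_0(\mu)\| = \|\lambda_H(\mu)\|$.

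The main obstacle is the temperedness input: one must be careful that \emph{every} irreducible subrepresentation appearing in $L^2_0(G/\Gamma)$ is tempered, which uses irreducibility of $\Gamma$ in an essential way (for a reducible lattice, say $\Gamma = \Gamma_1 \times \Gamma_2$, the representation $L^2(\Gamma_1\backslash \sln_3(\mathbb{R})) \otimes \mathbf{1}$ would contribute non-tempered — in fact almost-invariant — vectors and the inequality would fail). The higher-rank hypothesis guarantees, via Howe--Moore and the Kazhdan--Zimmer argument, that no such degenerate piece occurs, so that $\rho_0$ is tempered as a whole; I would state this cleanly as a lemma citing \cite{ZIMMER} (and \cite{Cow}, \cite{Nev} for the quantitative decay feeding temperedness) rather than reproving it. The passage from $\lambda_G|_H$ to $\lambda_H$ is routine once one recalls $\lambda_G = \mathrm{Ind}_H^G \lambda_H$ and that restriction of an induced representation to the inducing subgroup is a direct integral of right translates of $\lambda_H$, hence weakly equivalent to $\lambda_H$; this requires only that $H$ be closed, which holds here.
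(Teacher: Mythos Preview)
Your overall architecture---establish $\pi_0\prec\lambda_H$ and then pass to operator norms via \cite[Remark 8.B.6 (2)]{BekDelaHa}---is exactly the paper's, and the second step is fine. The gap is in how you obtain the weak containment.

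You assert that the full Koopman representation $\rho_0$ of $G=\sln_3(\mathbb R)\times\sln_3(\mathbb R)$ on $L^2_0(G/\Gamma)$ is tempered, i.e.\ $\rho_0\prec\lambda_G$, and attribute this to ``Kazhdan--Zimmer'' and ``Cowling--Howe--Moore decay''. Neither reference gives this. Howe--Moore gives $c_0$-decay of matrix coefficients, which is far weaker than temperedness. Cowling's estimates give that matrix coefficients lie in $L^{p}(G)$ for some finite $p$ depending on $G$, equivalently that some tensor power $\rho_0^{\otimes k}$ is tempered; this is precisely why Nevo's bound in the introduction carries the exponent $1/n(G)$ rather than $1$. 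The assertion $\rho_0\prec\lambda_G$ itself amounts, for an arithmetic lattice in $\sln_3\times\sln_3$, to a Ramanujan-type statement about the automorphic spectrum, and is not available as a black box from \cite{ZIMMER}, \cite{Cow}, or \cite{Nev}. Your remark that irreducibility of $\Gamma$ rules out ``degenerate pieces'' is correct but only shows there are no invariant vectors for each simple factor; that is vastly weaker than temperedness.

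The paper bypasses this entirely by working directly with the structure of $H=\mathbb R^2\rtimes\sln_2(\mathbb R)$. From irreducibility of $\Gamma$ the $G$-action on $G/\Gamma$ is mixing, hence the restriction to the noncompact normal subgroup $\mathbb R^2\subset H$ is ergodic, so $\pi_0|_{\mathbb R^2}$ has no nonzero invariant vector. Zimmer's Theorem 7.3.9 in \cite{ZIMMER} then applies to $H$ itself: any unitary representation of $\mathbb R^n\rtimes\sln_n(\mathbb R)$ whose restriction to $\mathbb R^n$ has no invariant vectors is weakly contained in $\lambda_H$ (this is a Mackey-machine argument exploiting that $\sln_n(\mathbb R)$ acts transitively on $\widehat{\mathbb R^n}\setminus\{0\}$). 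One obtains $\pi_0\prec\lambda_H$ with no appeal to temperedness of the ambient $G$-representation.
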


\begin{proof}
By construction, we can apply Corollary \ref{coro: actions sur reseaux}, so we have, $\|\pi_0(\mu)\|\geq \Vert \lambda_H(\mu) \Vert$.
For the other inequality, notice that since $\Gamma$ is irreducible, the action $G \curvearrowright G/\Gamma$ is mixing (see for example \cite[Corollary 3]{pinohowemoore}), and therefore the restriction of this action to any non-compact subgroup is ergodic.
In particular, the restriction of $\pi_0$ to the embedded copy of $\mathbb{R}^2$ in $H$ has no nontrivial invariant vectors. So, according to \cite[Theorem 7.3.9, p.146]{ZIMMER}\footnote{The reader willing to read in Zimmer's book the result we quote should be warned that in the literature, the symbol $\prec$ can denote two slightly different notions of weak containment (see \cite[Remark F.1.2 (ix), p. 397]{BHV} for a detailed account of the differences between the two).}, we have the weak containment \[\pi_0 \prec \lambda_H.\]
According to \cite[Remark 8.B.6 (2) page 244]{BekDelaHa}) this implies the inequality $\|\pi_0(\mu)\|\leq \Vert \lambda_H(\mu) \Vert$.
\end{proof}

\end{document}